\author{
Chris J. Kuhlman\addressmark{1,3}
\and
Henning S. Mortveit\addressmark{1,2}
\thanks{Email: \email{Henning.Mortveit@vt.edu} (corresponding author)}
\and
David Murrugarra\addressmark{2}
\and
V.~S.~Anil Kumar\addressmark{1,3}
}
\title
[Bifurcations in Boolean Networks]
{Bifurcations in Boolean Networks}
\address{
\addressmark{1}Network Dynamics and Simulation Science Laboratory, Virginia Tech\\
\addressmark{2}Department of Mathematics, Virginia Tech\\
\addressmark{3}Department of Computer Science, Virginia Tech
}
\keywords{Boolean networks, graph dynamical systems, synchronous,
  asynchronous, sequential dynamical systems, threshold, bi-threshold,
  bifurcation}
\newtheorem{theorem}{Theorem}[section]
\newtheorem{corollary}[theorem]{Corollary}
\newtheorem{lemma}[theorem]{Lemma}
\newtheorem{proposition}[theorem]{Proposition}
\newtheorem{example}[theorem]{Example}
\numberwithin{equation}{section}
\def\F{\mathbf{F}}
\def\N{\mathbb{N}}
\def\R{\mathbb{R}}
\def\<{\langle}
\def\>{\rangle}
\def\bigcard#1{\bigl|#1\bigr|}
\def\Circle{\mathrm{Circ}}
\def\vset{\mathrm{v}}
\def\eset{\mathrm{e}}
\def\kup{k^\uparrow}
\def\kdown{k^\downarrow}
\def\tfunc{\text{\bfseries \emph{t}}}
\def\lcm{\operatorname{lcm}}
\let\emptyset\varnothing
\definecolor{red}{rgb}{1.0,0.0,0.0}
\begin{document}
\maketitle

\begin{abstract}
  This paper characterizes the attractor structure of synchronous and
  asynchronous Boolean networks induced by bi-threshold functions.
  Bi-threshold functions are generalizations of standard threshold
  functions and have separate threshold values for the transitions
  $0\to1$ (up-threshold) and $1\to0$ (down-threshold). We show that
  synchronous bi-threshold systems may, just like standard threshold
  systems, only have fixed points and $2$-cycles as attractors.
  Asynchronous bi-threshold systems (fixed permutation update
  sequence), on the other hand, undergo a bifurcation. When the
  difference $\Delta$ of the down- and up-threshold is less than~$2$
  they only have fixed points as limit sets. However, for $\Delta \ge
  2$ they may have long periodic orbits. The limiting case of $\Delta
  = 2$ is identified using a potential function argument. Finally, we
  present a series of results on the dynamics of bi-threshold systems
  for families of graphs.
\end{abstract}




\section{Introduction}
A standard Boolean \emph{threshold function} $\tfunc_{k,m} \colon
\{0,1\}^m \longrightarrow \{0,1\}$ is defined by
\begin{equation}
\label{eq:threshold}
\tfunc_{k,m}(x_1, \ldots, x_m) =
\begin{cases}
1, & \text{if } \sigma(x_1, \ldots, x_m) \ge k  \text{\quad and} \\
0, & \text{otherwise,}
\end{cases}
\end{equation}
where $\sigma(x_1, \ldots, x_m) = \bigcard{ \{1 \le i \le m \mid x_i =
  1\} }$.
This class of functions is a common choice in modeling biological
systems~[\cite{Kauffman:69,Karaoz:04}], and social behaviors (e.g., joining
a strike or revolt, adopting a new technology or contraceptives,
spread of rumors and stress, and collective action), see,
e.g.,~[\cite{granovetter:granovetter-1978, bdkw-1989, macy-1991,
  centola:cm-2007, watts:watts-2002, kkt-2003}].
A \emph{bi-threshold function} is a function $\tfunc_{i,\kup,\kdown,m}
\colon \{0,1\}^{m} \longrightarrow \{0,1\}$ defined by
\begin{equation}
\tfunc_{i,\kup,\kdown,m}(x_1,\ldots, x_m) =
\begin{cases}
\tfunc_{\kup,m},& \text{if } x_i = 0,\\
\tfunc_{\kdown,m},& \text{if } x_i = 1 \;.
\end{cases}
\end{equation}
Here $i$ denotes a designated argument -- later it will be the vertex
or cell index.  We call $\kup$ the \emph{up-threshold} and $\kdown$
the \emph{down-threshold}. When $\kup=\kdown$ the bi-threshold
function coincides with a standard threshold function. Note that
unlike the standard threshold function in~\eqref{eq:threshold} which
is symmetric, the bi-threshold function is \emph{quasi-symmetric} (or
outer-symmetric) -- with the exception of index~$i$, it only depends
on its arguments through their sum.

In this paper we consider synchronous and asynchronous \emph{graph
  dynamical systems} (GDSs), see~[\cite{Mortveit:07,Macauley:09a}], of
the form $\F \colon \{0,1\}^n \longrightarrow \{0,1\}^n$ induced by
bi-threshold functions. These are natural extensions of threshold GDSs
and capture threshold phenomena exhibiting hysteresis properties.
Bi-threshold systems are also prevalent in social systems where each
individual can change back-and-forth between two states; Schelling
states: ``Numerous social phenomena display cyclic behavior ...'',
see~\cite[p.~86]{schelling-1978}.  Among his examples is whether
pick-up volleyball games will continue through an academic semester or
die (e.g., individuals regularly choosing to play or not play).  One
can also look at public health concerns such as obesity, where an
individual's back-and-forth decisions to diet or not---which are peer
influenced, [\cite{cf-2007}], and therefore can be at least partially
described by thresholds---are so commonplace that it has a name:
``yo-yo dieting''~[\cite{ntf-1994}].  When $\kup > \kdown$, a vertex
that transitions from state $0$ to state $1$ is more likely to remain
in state $1$ than what would be the case in a standard threshold
GDS. For the state transitions from~$1$ to~$0$ the situation is
analogous.  This suggests that the cost to change back to state~$0$ is
great or that a change to state~$0$ will occur only if the conditions
that gave rise to the $0 \rightarrow 1$ transition significantly
diminish.  A company that acquires and later divests itself of a
competitor is such an example.  Examples where $\kdown \ge \kup$ are
commonplace.  For example, [\cite{schelling-1978}] states that he often
witnesses people who start to cross the street against traffic lights,
but will return to the curb if they observe an insufficient number of
others following behind.  Overshooting, whereby a group of individuals
take some action, and within a short time period, a subset of these
pull back from it, is also of interest to the sociology
community~[\cite{bm:bm-2009}] and is characterized by~$\kdown \geq
\kup$.


It is convenient to introduce the quantity $\Delta = \kdown -
\kup$. The first of our main results (Theorem~\ref{thm:bithres-gca})
characterizes limit cycle structure of synchronous bi-threshold GDS
(also known as as Boolean networks). Building on the proof for
threshold functions in~\cite{Goles:81}, we prove that only fixed
points and periodic orbits of length~2 can occur for each possible
combination of $\kup$ and $\kdown$. Since we re-use parts of their
proof, and also since their proof only appears in French, a condensed
English translation is included in the appendix on
page~\pageref{sec:standard}.
The situation is very different for asynchronous bi-threshold GDSs
where a vertex permutation is used for the update
sequence. Our second main result states that when $\Delta < 2$, only
fixed points can occur as limit cycles. However, for $\Delta \ge 2$
there are graphs for which arbitrary length periodic orbits can be
generated. The case $\Delta = 2$ is identified using a potential
function argument and represents a (2-parameter) \emph{bifurcation} in
a discrete system, a phenomenon that to our knowledge is novel. We
also include a series of results for bi-threshold dynamics on special
graph classes. These offer examples of asynchronous bi-threshold GDSs
with long periodic orbits, and may also serve as building blocks in
construction and  modeling of bi-threshold systems with given cycle
structures.

{\bfseries Paper organization.} We introduce necessary definitions
and terminology for graph dynamical systems in
Section~\ref{sec:background}. The two main theorems are presented in
Sections~\ref{sec:thmsynch} and~\ref{sec:thmasynch}. Our collection of
results on dynamics for graph classes like trees and cycle graphs
follow in Section~\ref{sec:dynamics} before we conclude in
Section~\ref{sec:conclude}.


\section{Background  and Terminology}
\label{sec:background}

In the following we let $X$ denote an undirected graph with vertex set
$\vset[X] = \{1,2,\ldots,n\}$ and edge set $\eset[X]$. To
each vertex $v$ we assign a state $x_v \in K = \{0,1\}$ and refer to
this as the \emph{vertex state}. Next, we let $n[v]$ denote the
sequence of vertices in the $1$-neighborhood of $v$ sorted in
increasing order and write
\begin{equation*}
x[v] = (x_{n[v](1)}, x_{n[v](2)}, \ldots, x_{n[v](d(v)+1)} )
\end{equation*}
for the corresponding sequence of vertex states. Here~$d(v)$ denotes
the degree of~$v$. We call $x = (x_1, x_2, \ldots, x_n)$ the
\emph{system state} and $x[v]$ the \emph{restricted state}.
The dynamics of vertex states is governed by a list of \emph{vertex
  functions} $(f_v)_v$ where each $f_v \colon K^{d(v)+1}
\longrightarrow K$ maps as
\begin{equation*}
x_v( t + 1 ) = f_v\bigl( x(t)[v] \bigr)\;.
\end{equation*}
In other words, the state of vertex $v$ at time~$t+1$ is given
by~$f_v$ evaluated at the restricted state $x[v]$ at time~$t$. An
\emph{update mechanism} governs how the list of vertex functions
assemble to a \emph{graph dynamical system}
map (see e.g.~\cite{Mortveit:07,Macauley:09a})
\begin{equation*}
\F \colon K^n \longrightarrow K^n
\end{equation*}
sending the system state at time~$t$ to that at time~$t+1$.

For the update mechanism we will here use \emph{synchronous} and
\emph{asynchronous} schemes. In the former case we obtain Boolean
networks where
\begin{equation*}
\F(x_1, \ldots, x_n ) = (f_1(x[1]), \ldots, f_n(x[n]) ) \;.
\end{equation*}
This sub-class of graph dynamical systems is sometimes referred to as
\emph{generalized cellular automata}. In the latter case we will
consider permutation update sequences. For this we first introduce the
notion of \emph{$X$-local functions}. Here the $X$-local function $F_v
\colon K^n \longrightarrow K^n$ is given by
\begin{equation*}
  F_v(x_1, \ldots, x_n ) = (x_1, x_2, \ldots, f_v(x[v]), \ldots, x_n) \;.
\end{equation*}
Using $\pi = (\pi_1, \ldots, \pi_n) \in S_X$ (the set of all
permutations of $\vset[X]$) as an update sequence, the
corresponding asynchronous (or sequential) graph dynamical system
map $\F_\pi \colon K^n \longrightarrow K^n$ is given by
\begin{equation}
  \label{eq:asynch}
  \F_\pi = F_{\pi(n)} \circ F_{\pi(n-1)} \circ \cdots \circ F_{\pi(1)}\;.
\end{equation}
We also refer to this class of asynchronous systems as (permutation)
\emph{sequential dynamical systems} (SDSs).  The $X$-local functions are
convenient when working with the asynchronous case.
In this paper we will consider graph dynamical systems induced by
bi-threshold functions, that is, systems where each vertex function is
given as
\begin{equation*}
 f_v = f_{v,\kup_v,\kdown_v} := \tfunc_{v,\kup_v,\kdown_v,d(v)+1} \;.
\end{equation*}

The phase space of the GDS map $\F \colon K^n \longrightarrow K^n$ is
the directed graph with vertex set $K^n$ and edge set $\{ \bigl(x, \F(x)\bigr)
\mid x\in K^n\}$. A state $x$ for which there exists a positive
integer $p$ such that $\F^p(x) = x$ is a \emph{periodic point}, and
the smallest such integer $p$ is the \emph{period} of $x$. If $p=1$ we
call $x$ a \emph{fixed point} for $\F$. A state that is not periodic
is a \emph{transient state}. Classically, the \emph{omega-limit set}
of $x$, denoted by $\omega(x)$, is the accumulation points of the
sequence $\{\F^k(x)\}_{k\ge 0}$. In the finite case, the omega-limit
set is the unique periodic orbit reached from~$x$ under~$\F$.
\begin{example}
\label{ex:basic}
To illustrate the above concepts, take $X= \Circle_4$ as graph (shown
in Figure~\ref{fig:ex1}), and choose thresholds $\kup = 1$ and
$\kdown = 3$. For the synchronous case we have we have for example
$\F(1,0,0,1) = (0,1,1,0)$. Using the update sequence $\pi = (1,2,3,4)$
we obtain $\F_\pi(1,0,0,1) = (0,0,1,0)$. The phase spaces of $\F_\pi$ and
$\F$ are shown in Figure~\ref{fig:ex1}. Notice that $\F_\pi$ has
cycles of length~$3$, while the maximal cycle length of $\F$ is~$2$.
\begin{figure}[ht]
\centerline{\includegraphics[width=0.95\textwidth]{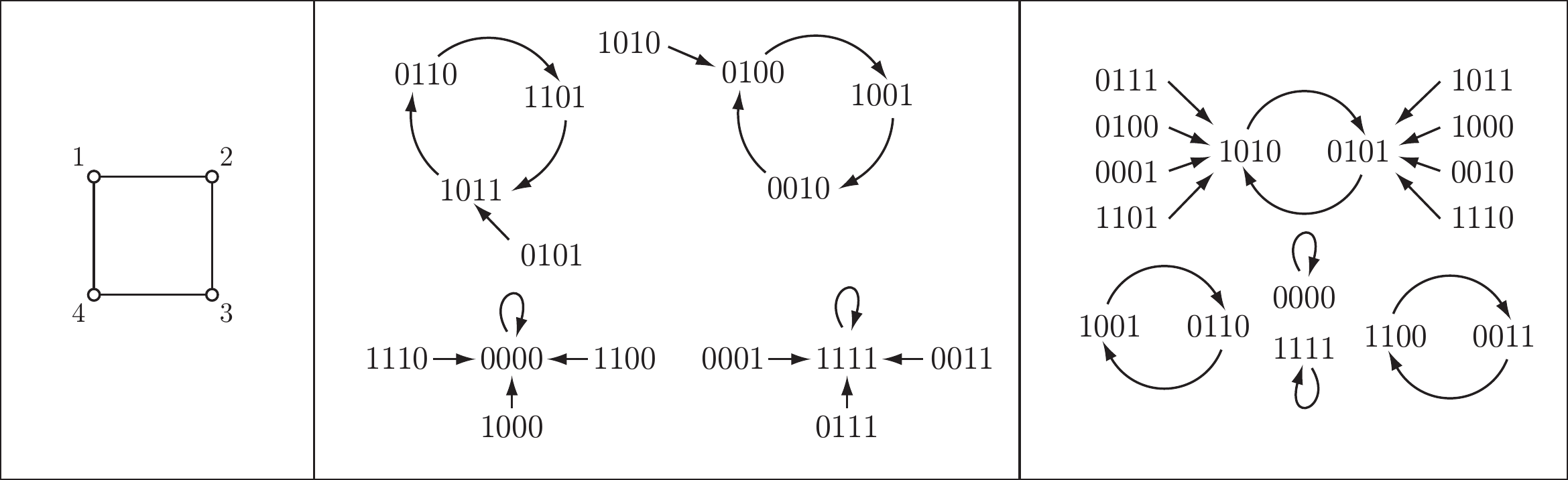}}
\caption{The graph $X = \Circle_4$ (left), and the phase spaces of
  $\F_\pi$ (middle) and $\F$ (right) for Example~\ref{ex:basic}.}
\label{fig:ex1}
\end{figure}
\end{example}
We remark that graph dynamical systems generalize concepts such as
cellular automata and Boolean networks, and can describe a wide range
of distributed, nonlinear phenomena.


\section{$\omega$-Limit Set Structure of Bi-Threshold GDS}

This section contains the two main results on dynamics of synchronous
and asynchronous bi-threshold GDSs.

\subsection{Synchronous Bi-Threshold GDSs}
\label{sec:thmsynch}

Let $K=\{0,1\}$ as before, let $A=(a_{ij})$ be a real-valued symmetric
matrix, let $(\kup_i)_{i=1}^n$ and $(\kdown_i)_{i=1}^n$ be
vertex-indexed sequences of up- and down-thresholds, and
define the function $\F = (f_1,\dots,f_n) \colon K^n
\longrightarrow K^n$ by
\begin{equation}
\label{eq:genbithreshold}
f_i(x_1,\dots,x_n) =
\begin{cases}
   1   & \text{ if } x_i=0 \text{ and }\sum\limits^n_{j=1}a_{ij}x_j \ge \kup_i  \\
   0   & \text{ if } x_i=1 \text{ and }\sum\limits^n_{j=1}a_{ij}x_j < \kdown_i  \\
  x_i  &   \text{ otherwise.}
\end{cases}
\end{equation}
The following theorem is a generalization of
Theorem~\ref{thm:thres-gca} (see appendix) to the case of bi-threshold
functions.
\begin{theorem}
\label{thm:bithres-gca}
If $\F$ is the synchronous GDS map over the complete graph of
order~$n$ with vertex functions as in Equation~\eqref{eq:genbithreshold}, then
for all $x\in K^n$, there exists $s\in\mathbb{N}$ such that
$\F^{s+2}(x) = \F^{s}(x)$.
\end{theorem}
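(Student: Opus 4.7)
My strategy is to reduce the bi-threshold dynamics to a classical symmetric-threshold system so that Theorem~\ref{thm:thres-gca} (Goles-Olivos, in the appendix) can be invoked directly, rather than re-running the energy-function argument from scratch.

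First I would collapse the two active cases of~\eqref{eq:genbithreshold} into a single state-dependent threshold rule. Writing $\Delta_i = \kdown_i - \kup_i$ and using $x_i(t)\in\{0,1\}$, the rule~\eqref{eq:genbithreshold} is equivalent to
\begin{equation*}
  x_i(t+1) = 1 \quad\Longleftrightarrow\quad \sum_{j=1}^n a_{ij}\,x_j(t) \;\ge\; \kup_i + \Delta_i\, x_i(t).
\end{equation*}
The offending $\Delta_i x_i(t)$ on the right can be absorbed on the left by shifting the diagonal: set $\tilde a_{ii} = a_{ii} - \Delta_i$ and $\tilde a_{ij} = a_{ij}$ for $j\neq i$. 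Since $\tilde A = (\tilde a_{ij})$ differs from $A$ only on the diagonal, it is again real-valued and symmetric, and the bi-threshold update becomes the standard symmetric threshold rule
\begin{equation*}
  x_i(t+1) = 1 \quad\Longleftrightarrow\quad \sum_{j=1}^n \tilde a_{ij}\,x_j(t) \;\ge\; \kup_i.
\end{equation*}

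With this reduction in hand, Theorem~\ref{thm:thres-gca} applied to $(\tilde A, (\kup_i)_i)$ yields the conclusion: every trajectory eventually enters a cycle of period at most~$2$, i.e.\ there exists $s\in\mathbb{N}$ with $\F^{s+2}(x)=\F^s(x)$. The only point needing care is verifying that the Goles-Olivos theorem accommodates a general real symmetric matrix, including the diagonal shift introduced above; the classical energy argument uses only symmetry of the interaction matrix and imposes no sign or positivity condition, so this is fine. If one prefers a self-contained derivation, I would instead exhibit the Lyapunov function on consecutive pairs,
\begin{equation*}
  E\bigl(x(t-1),x(t)\bigr) \;=\; -\sum_{i,j}a_{ij}\,x_i(t)\,x_j(t-1) \;+\; \sum_i \Delta_i\, x_i(t)\,x_i(t-1) \;+\; \sum_i \kup_i\,\bigl(x_i(t) + x_i(t-1)\bigr),
\end{equation*}
and check that $E$ is non-increasing in $t$ with strict decrease unless $x(t+1)=x(t-1)$; since the state space is finite, this forces a period-$2$ (or fixed-point) limit. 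The main obstacle throughout is merely bookkeeping the state-dependent threshold so that symmetry of the interaction matrix survives the reduction; once that is done the rest is a direct appeal to the classical machinery.
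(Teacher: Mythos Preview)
Your reduction is correct and constitutes a genuinely different---and considerably shorter---route than the paper's. The equivalence
\[
  x_i(t+1)=1 \iff \sum_j a_{ij}x_j(t)\ge \kup_i+\Delta_i\,x_i(t)
\]
holds case by case against~\eqref{eq:genbithreshold}, and folding the $\Delta_i x_i$ term into the diagonal preserves symmetry, so Theorem~\ref{thm:thres-gca} applies verbatim to $(\tilde A,(\kup_i)_i)$. That theorem places no hypothesis on $A$ beyond real symmetry, so the diagonal shift is harmless. The Lyapunov function you write down is exactly the classical Goles--Olivos energy for $\tilde A$, so the ``self-contained'' variant is also fine.

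By contrast, the paper does \emph{not} perform this reduction. It re-runs the Goles--Olivos contradiction argument directly on the bi-threshold map: partitioning $\supp(z_i)$ into blocks $C_k$ as in the original proof, it now must distinguish four block ``types'' $00,01,10,11$ according to the neighbouring values of $z_i$. The types $00$ and $11$ give $\Psi_{ik}<0$ as before, but types $01$ and $10$ only give $\Psi_{ik}<\kdown_i-\kup_i$ and $\Psi_{ik}<\kup_i-\kdown_i$ respectively. The paper then proves a combinatorial ``bands'' lemma (Lemma~\ref{lem:bands}, Corollary~\ref{cor:m01-m10}) showing $m_{01}=m_{10}$, so that these mixed contributions cancel and the total is still strictly negative.

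What each buys: your diagonal-shift trick is conceptually cleaner---it reveals that bi-threshold synchronous systems are literally standard threshold systems in disguise, so nothing new is happening dynamically. The paper's approach is more laborious but yields some extra combinatorial structure (the band decomposition and the $m_{01}=m_{10}$ identity) that might be of independent interest, and it keeps the argument explicit rather than black-boxing Theorem~\ref{thm:thres-gca}.
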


\def\supp{\ensuremath{\operatorname{supp}}}

The proof builds on the arguments of the proof from~\cite{Goles:81}
for standard threshold functions (see page~\pageref{sec:standard} of
the appendix). Note that we can use Lemma~\ref{lem:tprop1} in its
original form, but for Lemma~\ref{lem:tprop2} changes are needed
to adapt for bi-threshold functions. The position is marked {\bfseries
  [Cross-reference for bi-threshold systems]} in the the proof of
Lemma~\ref{lem:tprop2} on page~\pageref{lem:tprop2}. Before starting
the proof of the theorem above, we first introduce the notion of
\emph{bands} and give a result on their structural properties. This is
essential in the extension of the original result.

\medskip

Let $z_i \in S$ and assume that $\gamma_i \ge 3$. As in the proof
of~\cite{Goles:81}, we set
\begin{equation*}
\supp(z_i) = \bigl\{l\in \{0,1,2,\ldots,T-1 \} : z_l = 1 \bigr\};\,
\end{equation*}
and use their partition $\mathcal{C} = \{C_0, C_1, C_2, \ldots,
C_p\}$. By the assumption~$\gamma_i \ge 3$, we are guaranteed that
$p\ge 1$.
The bi-threshold functions require a more careful structural analysis
of the elements of~$\mathcal{C}$ than in the case of standard
threshold functions. We say that $C\in \mathcal{C}$ is of \emph{type}
$ab$ if $z_j = a$ and $z_{j'} = b$ where $j$ and $j'$ are the indices
immediately to the left and right of $C$, respectively (viewed modulo
$T$). Here we write $m_{ab} = m_{ab}(\mathcal{C})$ for the number of
elements of $\mathcal{C}$ of type~$ab$.

We claim that $m_{01} = m_{10}$. Before we prove this, observe first
that the sequence $\bigl(z_i(0), z_i(1), \ldots, z_i(T-1)\bigr)$ can
be split into contiguous sub-sequences (\emph{bands}) whose states
contain only isolated~0s, where the end points have state~1, and where
bands are separated by sub-sequences of lengths~$\ge 2$ whose state
consist entirely of~0s. By the construction of~$\mathcal{C}$, each
element $C\in\mathcal{C}$ must be fully contained in a single
band. Our claim above is now a direct consequence of the following
lemma:

\begin{lemma}
\label{lem:bands}
A band either ($i$) contains no element $C$ of type $01$ or $10$, or
($ii$) contains precisely one element $C$ of type $01$ and precisely
one element $C'$ of type $10$.
\end{lemma}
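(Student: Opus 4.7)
The plan is to fix a band $B$ and enumerate the elements of $\mathcal{C}$ contained in $B$ in increasing cyclic order as $C^{(1)}, C^{(2)}, \ldots, C^{(m)}$. Since $B$ begins and ends with a state-$1$ position and is preceded and followed (outside of $B$) by a run of length $\ge 2$ of state-$0$ positions, the immediate left neighbor of $C^{(1)}$ (the time step just before $B$) and the immediate right neighbor of $C^{(m)}$ (the time step just after $B$) both carry state $0$. Consequently $C^{(1)}$ has type $(0,\cdot)$ and $C^{(m)}$ has type $(\cdot,0)$, so neither end element can on its own be responsible for an imbalance between $01$- and $10$-types.

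Next I would analyze how two consecutive elements $C^{(l)}$ and $C^{(l+1)}$ inside $B$ can meet. Because the only $0$s occurring inside $B$ are isolated, exactly one of the following happens: either the last position of $C^{(l)}$ is immediately followed by the first position of $C^{(l+1)}$, in which case the right neighbor of $C^{(l)}$ and the left neighbor of $C^{(l+1)}$ are the same time step and carry state $1$; or $C^{(l)}$ and $C^{(l+1)}$ are separated by exactly one isolated $0$, in which case both neighbors carry state $0$. This dichotomy lets me associate to the band a binary sequence $s_1, s_2, \ldots, s_{m+1}$ with $s_1 = s_{m+1} = 0$ and with the type of $C^{(l)}$ equal to $(s_l, s_{l+1})$. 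Elements of type $01$ then correspond bijectively with the $(0,1)$-transitions of $s$ and elements of type $10$ with the $(1,0)$-transitions. Since $s$ starts and ends at $0$, the two counts are equal automatically, so this already recovers the observation $m_{01} = m_{10}$ per band.

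The remaining and main obstacle is to upgrade this equality to the assertion that the common count is at most $1$. Here I would invoke the structural properties of the partition $\mathcal{C}$ inherited from Goles' construction (the same construction cited just before the lemma): unpacking the definition, within one band the sequence $s$ cannot interleave adjacencies and isolated-$0$ separations, so $s$ consists of at most one maximal run of $1$s. Case (i) of the lemma then corresponds to $s$ being identically $0$, and case (ii) to $s$ having exactly one such run, producing precisely one $C$ of type $01$ (at the left end of the run) and precisely one $C'$ of type $10$ (at its right end). I expect nearly all the remaining work to be in verifying this unimodality of $s$ directly from the definition of $\mathcal{C}$, which is where the bi-threshold nature of the dynamics enters and makes the argument strictly stronger than the one needed for ordinary thresholds.
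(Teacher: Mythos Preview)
Your proposal rests on a mistaken picture of the partition~$\mathcal{C}$. The elements $C_k$ are \emph{not} contiguous blocks of time steps; by definition each $C_k$ is an arithmetic progression with step~$2$ inside $\supp(z_i)$. Consequently, the elements contained in a single band \emph{interleave} rather than follow one another, and your enumeration $C^{(1)},\ldots,C^{(m)}$ ``in increasing order,'' together with the dichotomy ``adjacent or separated by one isolated~$0$,'' collapses. For a concrete counterexample, take the band $B=[0,8]$ with states $1,1,1,0,1,0,1,1,1$. The partition elements in $B$ are $\{0,2,4,6,8\}$, $\{1\}$, and $\{7\}$: the first spans the whole band while the other two sit strictly inside it. There is no linear arrangement in which each element's last position is followed by the next element's first position, and it is certainly false that the rightmost-starting element $\{7\}$ has right neighbor outside~$B$ (its right neighbor is position~$8$, with state~$1$). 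Your binary sequence $s_1,\ldots,s_{m+1}$ therefore does not encode the types, and the unimodality you plan to verify is the wrong target.

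The actual argument is shorter and purely combinatorial --- the dynamics play no role. By construction every $C_k$ begins at some $l_k$ with $z_i(l_k-2)=0$. If in addition the left type is~$0$, i.e.\ $z_i(l_k-1)=0$, then positions $l_k-1$ and $l_k-2$ are two consecutive zeros; since a band contains only isolated zeros, at least one of them lies outside~$B$, which forces $l_k=b$. Thus the element containing the first position~$b$ of the band is the \emph{only} element whose left type is~$0$. Symmetrically, using $z_i(l_k+2q_k+2)=0$, the element containing the last position~$b'$ is the only one whose right type is~$0$. If these two elements coincide you are in case~($i$) (one element of type~$00$, all others~$11$); if not you are in case~($ii$) (one~$01$, one~$10$, all others~$11$). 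This is exactly the paper's ``sweep'' argument, which chains from the element containing~$b$ to the one containing~$b'$ and observes that every element not on that chain is of type~$11$.
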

\begin{proof}
Fix a band $B$ and let $C\in\mathcal{C}$ be the partition containing
the initial element of B. There are now two possibilities.
In the first case, $C$ also contains the final element of $B$. Then
$C$ has type $00$, and any other partition element contained in $B$ is
necessarily of type $11$.
In the second case, $C$ terminates before the end of $B$. The
configuration at the end of $C$ must then be as\hfill\par
\centerline{\includegraphics[]{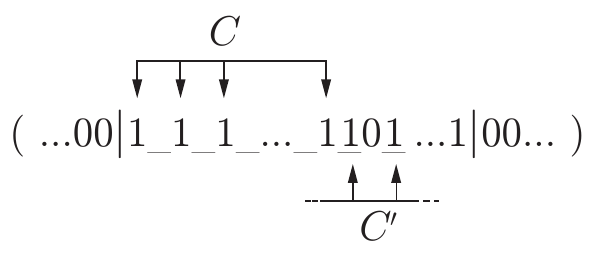}}\hfill\par\noindent and $C$ is
of type~$01$. The element $C'$ containing the index after the last
element of $C$ either goes all the way to the end of $B$, in which case it
is of type $10$, or it terminates before that in which case the
situation is as in the diagram above and $C'$ is of type $11$. By
repeated application of this argument, the band $B$ is eventually
exhausted with an element $C''$ of type $10$. All other elements of
$\mathcal{C}$ within $B$ not included in the sequence of partitions
$C$, $C'$ and so on, must be of type~$11$, and the proof is complete.
\end{proof}

\begin{corollary}
\label{cor:m01-m10}
$m_{01}(\mathcal{C}) = m_{10}(\mathcal{C})$
\end{corollary}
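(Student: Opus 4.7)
The plan is to observe that Corollary~\ref{cor:m01-m10} is essentially an immediate counting consequence of Lemma~\ref{lem:bands}, so the work to be done is just bookkeeping. Specifically, each element $C\in\mathcal{C}$ is fully contained in a unique band (as noted in the paragraph preceding the lemma), so the counts $m_{01}$ and $m_{10}$ decompose as sums of band-wise contributions.

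I would then apply Lemma~\ref{lem:bands} band by band. For any band $B$, case ($i$) contributes $0$ to both $m_{01}$ and $m_{10}$, while case ($ii$) contributes exactly $1$ to each of $m_{01}$ and $m_{10}$. Summing over all bands in the cyclic sequence $\bigl(z_i(0),\ldots,z_i(T-1)\bigr)$ gives $m_{01}(\mathcal{C}) = m_{10}(\mathcal{C})$.

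There is essentially no obstacle here. The one point worth being explicit about is that the bands and partitions are indexed cyclically modulo $T$, so the notions of ``left'' and ``right'' neighbors of each $C$ are well defined and each element of type $01$ or $10$ is counted exactly once under the band-wise decomposition. With that in place, the corollary reduces to a one-line summation and the proof is complete.
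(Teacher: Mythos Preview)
Your proposal is correct and matches the paper's approach exactly: the paper presents Corollary~\ref{cor:m01-m10} as an immediate consequence of Lemma~\ref{lem:bands}, with the band-wise decomposition spelled out in the paragraph preceding the lemma. Your explicit bookkeeping (summing the $(0,0)$ or $(1,1)$ contributions over bands) is precisely the intended one-line argument.
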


\begin{proof}[(Theorem~\ref{thm:bithres-gca})]
{\bfseries Claim:} If $\gamma_i \ge 3$ for $z_i \in S$ then
$\sum_{j=1}^n L(z_i, z_j) < 0$.\hfill\par
We can write
\begin{equation*}
\sum_{i=1}^n L(z_i, z_j)
= \sum_{k=0}^p \Bigl( \sum_{j=1}^n a_{ij}
\sum_{l\in C_k} \bigl( z_j(l+1) - z_j(l-1)   \bigr) \Bigr)
= \sum_{k=0}^p \Psi_{ik}\;,
\end{equation*}
where
\begin{equation*}
\Psi_{ik} = \sum_{j=1}^n a_{ij} \sum_{l\in C_k} \bigl( z_j(l+1) -
z_j(l-1) \bigr)
= \sum_{j=1}^n a_{ij} z_j(l_k+2q_k+1) - \sum_{j=1}^n a_{ij} z_j(l_k-1) \;.
\end{equation*}
We need to consider $\Psi_{ik}$ for the four types of partition
elements. As in the original proof, note that~$\Psi_{i0} = 0$.

\medskip
\noindent \underline{{\bfseries $C_k$ is of type $00$:}} in this case
$z_i(l_k-1) = 0$, $z_i(l_k) = 1$, $z_i(l_k+2q_k+1) = 0$ and
$z_i(l_k+2q_k+2) = 0$, which is only possible if
\begin{equation*}
\sum_{j=1}^n a_{ij} z_j(l_k-1) \ge \kup_i
\quad \text{and} \quad
\sum_{j=1}^n a_{ij} z_j(l_k+2q_k+1) < \kup_i \;,
\end{equation*}
which implies that $\Psi_{ik} < 0$.

\medskip
\noindent \underline{{\bfseries $C_k$ is of type $11$:}} this case is
completely analogous to the $00$ case, and again we conclude that
$\Psi_{ik} < 0$.

\medskip
\noindent \underline{{\bfseries $C_k$ is of type $10$:}} here
$z_i(l_k-1) = 1$, $z_i(l_k) = 1$, $z_i(l_k+2q_k+1) = 0$ and
$z_i(l_k+2q_k+2) = 0$. This implies that
\begin{equation*}
\sum_{j=1} a_{ij}z_j(l_k-1) \ge \kdown_i
\quad\text{and}\quad
\sum_{j=1} a_{ij}z_j(l_k+2q_k+1) < \kup_i \;,
\end{equation*}
leading to $\Psi_{ik} < \kup_i - \kdown_i$.

\medskip
\noindent \underline{{\bfseries $C_k$ is of type $01$:}} this case is
essentially the same as the $10$ case, but here $\Psi_{ik} < \kdown_i -
\kup_i$.

\medskip

Using the above four cases, we now have
\begin{equation*}
\sum_{j=0}^n L(z_i, z_j) = \sum_{k=0}^p \Psi_{ik}
< 0
+ m_{00} \cdot 0
+ m_{11} \cdot 0
+ m_{10} (\kup_i - \kdown_i)
+ m_{01} (\kdown_i - \kup_i) = 0\;,
\end{equation*}
where the last equality follows by Corollary~\ref{cor:m01-m10}. Clearly,
this leads to the same contradiction as in the proof of
Theorem~\ref{thm:thres-gca}.
\end{proof}

An immediate consequence of Theorem~\ref{thm:bithres-gca} is the following:
\begin{corollary}
A synchronous bi-threshold GDS may only have fixed points and 2-cycles as
limit sets.
\end{corollary}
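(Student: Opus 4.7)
The plan is to derive this corollary as an immediate consequence of Theorem~\ref{thm:bithres-gca}. First, I would observe that the setup in Equation~\eqref{eq:genbithreshold}, formulated via an arbitrary real symmetric matrix $A = (a_{ij})$, in fact subsumes every synchronous bi-threshold GDS on an arbitrary undirected graph $X$: one simply takes $a_{ij} = 1$ when $j \in n[i]$ (including $j = i$) and $a_{ij} = 0$ otherwise, which is symmetric precisely because $X$ is undirected. So the hypothesis of the theorem applies in the full generality claimed by the corollary.

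Given that reduction, the main step is essentially a one-liner. For any $x \in K^n$ the theorem produces an $s \in \mathbb{N}$ with $\F^{s+2}(x) = \F^{s}(x)$. Setting $y := \F^{s}(x)$, this reads $\F^{2}(y) = y$, so $y$ is a periodic point whose period divides~$2$; hence the period is $1$ (a fixed point) or $2$ (a 2-cycle). Since the phase space is finite, the orbit of $x$ eventually reaches its unique limit cycle, which must be $\{y, \F(y), \F^2(y), \ldots\} = \omega(x)$, and by the preceding observation this is either a fixed point or a 2-cycle.

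Since $x$ was arbitrary, every $\omega$-limit set of $\F$ is a fixed point or a 2-cycle, as claimed. There is no serious obstacle here; the only mild subtlety is the reduction from a bi-threshold GDS on a general graph $X$ to the matrix-weighted formulation of Equation~\eqref{eq:genbithreshold}, and once that identification is made the conclusion is automatic from Theorem~\ref{thm:bithres-gca}.
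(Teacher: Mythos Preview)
Your proposal is correct and matches the paper's approach: the paper states this corollary as an immediate consequence of Theorem~\ref{thm:bithres-gca} without further argument, and your explicit reduction (taking $A$ to be the closed-neighborhood $0/1$ matrix of $X$) together with the observation that $\F^{s+2}(x)=\F^s(x)$ forces the period to divide~$2$ is exactly the intended reasoning.
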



\subsection{Asynchronous Bi-Threshold GDSs}
\label{sec:thmasynch}

\begin{theorem}
\label{thm:asynch}
Let $X$ be a graph, let $\pi\in S_x$ and let $(f_v)_v$ be bi-threshold
functions all satisfying $\Delta(v) = \kdown_v - \kup_v \le 1$. The
sequential dynamical system map $\F_\pi$ only has fixed points as
limit sets.
\end{theorem}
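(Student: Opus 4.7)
The plan is to construct a strict Lyapunov (potential) function $E \colon \{0,1\}^n \longrightarrow \mathbb{R}$ that strictly decreases whenever an $X$-local map $F_v$ actually flips the state at $v$. Once this is in place, any periodic point of $\F_\pi$ must be a fixed point, and because the phase space is finite every $\omega$-limit set consists of fixed points.

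Writing $s_v = \sum_{u : \{u,v\}\in\eset[X]} x_u$ for the open-neighborhood sum at $v$, the bi-threshold rule translates into two inequalities: a flip $0\to 1$ at $v$ requires $s_v \ge \kup_v$, while a flip $1\to 0$ at $v$ requires $s_v + 1 < \kdown_v$, equivalently $s_v \le \kdown_v - 2$. The ``$+1$'' reflects that $x_v$ itself contributes to the closed-neighborhood sum on which $\tfunc_{v,\kup_v,\kdown_v,d(v)+1}$ depends. I would then take
$$E(x) = -\tfrac{1}{2}\sum_{\{u,v\}\in\eset[X]} x_u x_v + \sum_{v\in\vset[X]} \theta_v x_v,$$
with real parameters $\theta_v$ to be tuned. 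A direct calculation shows that when $F_v$ changes the coordinate at $v$ from $x_v$ to $x_v'$ and fixes the others, the increment is $E(x') - E(x) = (x_v' - x_v)(\theta_v - s_v)$.

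I would choose $\theta_v \in (\kdown_v - 2,\, \kup_v)$ for each vertex. This open interval is nonempty iff $\kdown_v - \kup_v < 2$, which (for integer thresholds) is precisely the hypothesis $\Delta(v) \le 1$. With such a choice, a $0\to 1$ flip produces $\Delta E = \theta_v - s_v \le \theta_v - \kup_v < 0$, and a $1\to 0$ flip produces $\Delta E = s_v - \theta_v \le (\kdown_v - 2) - \theta_v < 0$. Hence every factor $F_{\pi(i)}$ in the composition defining $\F_\pi$ either leaves the state fixed or strictly decreases $E$.

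To conclude: if $\F_\pi^T(y) = y$, then expanding $\F_\pi^T$ into the composition of the $Tn$ maps $F_{\pi(i)}$ and telescoping the energy increments, strict monotonicity forces every individual update in that composition to be a no-op, whence $\F_\pi(y) = y$ and $y$ is a fixed point. The main obstacle is identifying the correct interval for $\theta_v$; once one recognizes that $\Delta(v) < 2$ is exactly the condition making $(\kdown_v - 2, \kup_v)$ nonempty, the rest is a routine finite-state Lyapunov argument. Note also that this makes the threshold value $\Delta = 2$ feel sharp, consistent with the bifurcation announced in the introduction.
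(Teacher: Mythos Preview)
Your argument is correct and is essentially the paper's proof in different clothing: both build a potential that strictly drops on every local flip, and in fact the paper's vertex-plus-edge potential $P(x)$ equals, up to an additive constant and a factor of~$2$, your energy $E$ with the particular choice $\theta_v = \tfrac{\kup_v+\kdown_v}{2}-1$, which is exactly the midpoint of your interval $(\kdown_v-2,\kup_v)$. One cosmetic slip: with the $\tfrac{1}{2}$ in front of the edge sum the increment would be $(x_v'-x_v)(\theta_v-\tfrac{1}{2}s_v)$, not $(x_v'-x_v)(\theta_v-s_v)$; simply drop that $\tfrac{1}{2}$ (or halve your interval endpoints) and everything is consistent.
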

As before, the graph $X$ is finite. Note also that the per-vertex
thresholds $\kup$ and $\kdown$ need not be uniform for the graph.

\begin{proof}
The proof uses a potential function based on a construction
in~\cite{Barrett:06a}, but see also~\cite{Goles:85}. For a given
state~$x\in K^n$ we assign to each vertex the potential
\begin{equation*}
P(v,x) =
\begin{cases}
\kdown_v, & x_v = 1\\
d(v)+2-\kup_v, & x_v = 0\;.
\end{cases}
\end{equation*}
Note that the quantity $d(v)+2-\kup_v$ is the smallest number of vertex states
in the local state $x[v]$ that must be zero to ensure that $x_v$ remains in
state zero. Similarly, an edge $e = \{v,v'\}$ is assigned the potential
\begin{equation*}
P(e=\{v,v'\},x) =
\begin{cases}
1,& x_v \ne x_{v'} \\
0,& x_v = x_{v'} \;.
\end{cases}
\end{equation*}
For book-keeping, we let $n_i = n_i(v; x)$ denote the number of
vertices adjacent to $v$ in state $i$ for $i = 0,1$ and note that $n_0
+ n_1 = d$. The \emph{system potential} $P(x)$ at the state~$x$ is the
sum of all the vertex and all the edge potentials. For the theorem
statement it is clearly sufficient to show that each application of a
vertex function that leads to a change in a vertex state causes the
system potential to drop.

Consider first the case where $x_v$ is mapped from $0$ to $1$ which
implies that $n_1 \ge \kup_v$. Since a change in system potential only
occurs for vertex $v$ and edges incident with $v$, we may disregard
the other potentials when determining this change. Denoting the system
potential before and after the update by $P$ and $P'$, we have $P =
d+2-\kup_v + n_1$ and $P' = \kdown_v + n_0$ which implies that
\begin{align*}
P' - P &= \kdown_v + n_0 - d - 2 + \kup_v - n_1 = \kdown_v + \kup_v - 2n_1 - 2\\
       &\le -(\kup_v - \kdown_v) -2 = \Delta(v) - 2\;,
\end{align*}
and this is strictly negative whenever $\Delta = \kdown - \kup \le
1$. Similarly, for the transition where $x_v$ maps from $1$ to $0$ one
must have $n_1 + 1 \le \kdown_v - 1$ or $n_1 \le \kdown_v-2$. In this case
we have
\begin{align*}
P' - P &= [d+2-\kup_v+n_1] - [\kdown_v+n_0] = 2n_1 + 2 -\kdown_v - \kup_v \\
       &\le 2\kdown_v - 4 + 2 - \kup_v - \kdown_v = \Delta(v) - 2
\end{align*}
as before, concluding the proof.
\end{proof}


\subsection{Bifurcations in Asynchronous GDS}

A natural question now is what happens in the case where $\Delta =
\kdown - \kup = 2$ since periodic orbits are no longer excluded by the
arguments in the proof above. The following proposition shows that
there are graphs and choices of $\kup$ and $\kdown$, such that~$\Delta
= 2$, for which there are periodic orbits of arbitrary length.
\begin{proposition}
\label{prop:circle}
The bi-threshold GDS map over $X = \Circle_n$ with update sequence
$\pi = (1,2,3,\ldots,n)$, thresholds $\kup = 1$ and $\kdown = 3$, has
cycles of length $n-1$.
\end{proposition}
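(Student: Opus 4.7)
The plan is to exhibit an explicit periodic orbit of length~$n-1$. With $\kup = 1$, $\kdown = 3$, and every vertex of $\Circle_n$ having degree~$2$, the bi-threshold rule simplifies dramatically: a vertex in state~$0$ flips to~$1$ as soon as at least one of its two neighbors is in state~$1$, while a vertex in state~$1$ stays~$1$ only when both of its neighbors are also in state~$1$. This makes the trajectory of a single active vertex under the sweep $F_n \circ F_{n-1} \circ \cdots \circ F_1$ very easy to follow.

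For $2 \le k \le n-1$, let $e_k \in \{0,1\}^n$ denote the state with $x_k = 1$ and all other entries $0$, and let $e_{1,n}$ denote the state with $x_1 = x_n = 1$ and all other entries $0$. I would prove that
\begin{equation*}
e_{1,n} \longrightarrow e_{n-1} \longrightarrow e_{n-2} \longrightarrow \cdots \longrightarrow e_2 \longrightarrow e_{1,n}
\end{equation*}
is a periodic orbit of $\F_\pi$. These $n-1$ states are pairwise distinct (they have distinct supports), so the corresponding period is exactly~$n-1$, proving the proposition.

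The verification reduces to three direct sweep-computations: (i) $\F_\pi(e_k) = e_{k-1}$ for $3 \le k \le n-1$; (ii) $\F_\pi(e_2) = e_{1,n}$; and (iii) $\F_\pi(e_{1,n}) = e_{n-1}$. In case~(i), vertices $1, \ldots, k-2$ see all-$0$ closed neighborhoods and are unchanged; $F_{k-1}$ sets $x_{k-1} \leftarrow 1$ because $x_k = 1$; then $F_k$ sees the newly updated $x_{k-1} = 1$ but $x_{k+1} = 0$, so its closed-neighborhood sum is $2 < \kdown$ and $x_k \leftarrow 0$; vertices $k+1, \ldots, n$ again have all-$0$ neighborhoods. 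Case~(ii) handles the left boundary: $F_1$ turns vertex~$1$ on (since $x_2 = 1$), $F_2$ turns vertex~$2$ off, and $F_n$ at the end of the sweep turns vertex~$n$ on because the updated $x_1 = 1$ is now its neighbor. Case~(iii) handles the right boundary: $F_1$ turns vertex~$1$ off (closed-neighborhood sum $2 < \kdown$), the interior updates do nothing, $F_{n-1}$ turns vertex~$n-1$ on because $x_n = 1$, and finally $F_n$ turns vertex~$n$ off.

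The only real subtlety is keeping track of which neighbors have already been updated when evaluating each $F_{\pi_j}$; the asymmetric role played by vertices~$1$ and~$n$ in the update order is precisely what prevents the orbit from having length~$n$ and forces it down to~$n-1$.
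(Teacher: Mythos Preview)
Your proof is correct and follows essentially the same approach as the paper: the paper exhibits the identical periodic orbit, starting instead at $e_{n-1} = (0,\ldots,0,1,0)$ and noting that the single~$1$ shifts left until reaching $e_2$, which then maps to $e_{1,n}$ and back to $e_{n-1}$. Your write-up is in fact more detailed than the paper's, which dispatches the sweep computations as ``straightforward'' and ``easily seen.''
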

\begin{proof}
We claim that the state $x = (0,0,\ldots,0,1,0)$ is on an
$(n-1)$-cycle. Straightforward computations give that the single
$1$-state is shifted one position to the left upon each application of
$\F_\pi$ until the state $y = (0,1,0,\ldots,0)$ is reached. The image
of this state is $z = (1,0,0,\ldots, 0,0,1)$ which is easily seen to
map to $x$. The smallest number of iterations required to return to
the original state $x$ is $n-1$, producing a cycle as claimed.
\end{proof}

In other words, by taking $\Delta$ as a parameter, we see that the
bi-threshold sequential dynamical system undergoes a bifurcation at
$\Delta = 2$.


\section{Dynamics of Bi-Threshold GDSs}
\label{sec:dynamics}

\subsection{Graph Unions}

From Proposition~\ref{prop:circle}, we see that for $X = \Circle_n$
with threshold $\kup = 1$ and $\kdown = 3$ at each vertex, we obtain
an $(n-1)$-cycle for the update sequence $\pi = (1,2, \dots, n)$. The
following proposition demonstrates how we can combine graphs to obtain
larger cycle sizes for bi-threshold SDSs with arbitrarily nonuniform
$\kup,\kdown$.  In particular, the result
applies to the case where we combine $\Circle_n$ graphs where~$p =
n-1$ is prime.

\begin{proposition}
\label{prop:union}
For $i=1,2$ let $X_i$ be a graph for which the bi-threshold GDS with
update sequence $\pi_i$ has a cycle in phase space of length $c_i$.
Let $u_i\in \vset[X_i]$, and let $X$ be the graph obtained as the
disjoint union of $X_1$ and $X_2$ plus additionally the vertex
$w\not\in \vset[X_1],\vset[X_2]$ with the edges $\{u_1, w\}$ and
$\{u_2,w\}$. Moreover, let all thresholds of vertices in $X_1$ and
$X_2$ be as before,
and assign threshold $\kup = 3$ to $w$. The
bi-threshold SDS map over $X$ with update sequence $\pi =
(\pi_1|\pi_2|w)$ [juxtaposition] has a cycle of length $\lcm(c_1,
c_2)$.
\end{proposition}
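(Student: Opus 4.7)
The plan is to show that the new vertex $w$ acts as a ``firewall'' between the two subgraphs: with $\kup_w = 3$ and degree $d(w) = 2$, the closed neighborhood of $w$ has three elements, so transitioning $w$ from $0$ to $1$ requires $x_w + x_{u_1} + x_{u_2} \ge 3$, which is impossible when $x_w = 0$ (the maximum achievable value being $2$). Hence if we choose any initial state with $x_w = 0$, then $w$ remains in state $0$ for all iterations of $\F_\pi$, regardless of what $\kdown_w$ is chosen to be.

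With $w$ frozen at~$0$, the update of $w$ in the final position of $\pi$ is a no-op. Moreover, for the updates of $u_1$ and $u_2$, the state $x_w = 0$ contributes nothing to the sums $\sum_j x_j$ taken over their closed neighborhoods in $X$; comparing against the (unchanged) thresholds $\kup_{u_i}, \kdown_{u_i}$ therefore produces the same result as the corresponding update in $X_i$ alone. All other vertices in $X_1$ and $X_2$ are unaffected by the additional vertex $w$. Consequently, writing a state of $X$ as a triple $(y^{(1)}, y^{(2)}, 0)$ with $y^{(i)} \in K^{|\vset[X_i]|}$, one application of $\F_\pi = F_w \circ F_{\pi_2} \circ F_{\pi_1}$ yields
\begin{equation*}
\F_\pi\bigl(y^{(1)}, y^{(2)}, 0\bigr) = \bigl(\F_{\pi_1}(y^{(1)}),\, \F_{\pi_2}(y^{(2)}),\, 0\bigr)\;.
\end{equation*}

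Iterating, $\F_\pi^k(y^{(1)}, y^{(2)}, 0) = (\F_{\pi_1}^k(y^{(1)}), \F_{\pi_2}^k(y^{(2)}), 0)$. Now choose $y^{(i)}$ to lie on a $c_i$-cycle of $\F_{\pi_i}$, which exists by hypothesis. Then $\F_\pi^k(y^{(1)}, y^{(2)}, 0) = (y^{(1)}, y^{(2)}, 0)$ if and only if $c_1 \mid k$ and $c_2 \mid k$, i.e., if and only if $\lcm(c_1, c_2) \mid k$. The resulting cycle thus has length exactly $\lcm(c_1, c_2)$, as claimed.

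There is no serious obstacle here; the only subtlety worth being explicit about is verifying the invariance $x_w \equiv 0$, which relies on the chosen up-threshold $\kup_w = 3$ exceeding the number of neighbors of $w$, and on our freedom to pick the initial state so that $x_w = 0$. Once that invariance is established, the decoupling of the dynamics on $X_1$ and $X_2$ is immediate, and the lcm statement follows from elementary number theory applied to the independent cycles on the two components.
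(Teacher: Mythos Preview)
Your proof is correct and follows essentially the same approach as the paper: show that $w$ with $\kup_w=3$ and degree~$2$ is frozen at state~$0$, so the dynamics on $X_1$ and $X_2$ decouple and the combined orbit has length $\lcm(c_1,c_2)$. Your write-up is in fact more explicit than the paper's, spelling out the product decomposition $\F_\pi(y^{(1)},y^{(2)},0)=(\F_{\pi_1}(y^{(1)}),\F_{\pi_2}(y^{(2)}),0)$ and the ``if and only if'' divisibility argument for the exact period.
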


\begin{proof}
Let vertex $w$ have $\kup = 3$, so that $w$ will never transition to
state~$1$ from state~$0$. Let $x = (x_1|x_2|x_w)$ be the state over
$X$ constructed from states $x_1$ and $x_2$ on the respective
$c_i$-cycle over $X_1$ and $X_2$ with $x_w = 0$. The only
vertices whose connectivity, and therefore induced vertex function,
are affected by the addition of $w$ are $u_1$ and $u_2$.  But the
state transitions for $u_1$ and $u_2$ are unaffected because each is
predicated on $\sigma(x[u_1])$ and $\sigma(x[u_2])$, respectively, and
these latter two quantities are not altered by the state of $w$
because that state is fixed at $0$ by construction.  Hence, the phase
space of $X$ contains a cycle of length $\lcm(c_1, c_2)$ as claimed.
\end{proof}

Thus, for $\kup=1$ and $\kdown=3$, there exists a circle graph and
permutation $\pi$ that will produce a cycle in phase space of length
three or greater, and multiple circle graphs can be combined to
produce graphs with large orbit cycles without modifying the
thresholds of vertices in $X_1$ and $X_2$.


\subsection{Trees}

Propositions~\ref{prop:circle} and~\ref{prop:union} show how periodic
orbits of length $>2$ arise over graphs that contain cycles. This
section investigates bi-threshold SDS maps where~$X$ is a tree.

\medskip

To start, we first recall the notion of $\kappa$-equivalence of
permutations from~\cite{Macauley:09a,Macauley:08b}. Two permutations
$\pi,{\pi'}\in S_X$ are \emph{$\kappa$-equivalent} if the
corresponding \emph{induced acyclic orientations}~$O_\pi$
and~$O_{\pi'}$ of~$X$ are related by a sequence of source-to-sink
conversions. Here, the orientation $O_\pi$ is obtained from $\pi$ by
orienting each edge $\{v,v'\}\in\eset[X]$ as $(v,v')$ if~$v$ precedes
$v'$ in $\pi$ and as $(v',v)$ otherwise. This is an equivalence
relation, and it is shown in~\cite{Macauley:09a} that ($i$) for a tree
the number of $\kappa$-equivalence classes is $\kappa(X) = 1$, and
($ii$) that $\F_\pi$ and $\F_{\pi'}$ have the same periodic orbit
structure (up to digraph isomorphism/topological conjugation) whenever
$\pi$ and ${\pi'}$ are $\kappa$-equivalent. As a result, \emph{we only
  need to consider a single permutation update sequence} to study the
possible periodic orbit structures of permutation SDS maps over a
tree~$X$.

\medskip

The following result shows that there can be cycles of length~$3$
or greater for permutation SDS over a tree.

\begin{proposition}
\label{prop:h-tree}
For any integer $c \geq 3$ there is a tree $X$ on $n=4c-6$ vertices
such that bi-threshold permutation SDS maps over $X$ with thresholds
$\kup=1$ and $\kdown=3$ have periodic orbits of length $c$.
\end{proposition}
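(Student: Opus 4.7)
The plan is to construct an explicit ``H-tree'' $X_c$ on $4c-6$ vertices and, by direct computation, exhibit an orbit of $\F_\pi$ of period exactly $c$. The tree $X_c$ consists of two ``hub'' vertices $u$ and $u'$, each carrying two pendant leaves, joined by a backbone of internal vertices with further attached leaves so as to reach the target vertex count, with each additional unit of $c$ contributing a four-vertex ``relay'' gadget along the backbone. In the base case $c=3$ the backbone is empty and $u,u'$ are adjacent; numbering the six vertices with $u=3$, $u'=4$ and leaf pairs $\{1,2\},\{5,6\}$, one verifies directly under $\pi=(1,2,3,4,5,6)$ the $3$-cycle
\begin{equation*}
 (0,0,1,0,0,0)\;\longmapsto\;(1,1,1,1,1,1)\;\longmapsto\;(0,0,0,1,0,0)\;\longmapsto\;(0,0,1,0,0,0)\,.
\end{equation*}

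Next, since $X_c$ is a tree, the $\kappa$-equivalence result recalled just before the statement implies that the periodic-orbit structure of $\F_\pi$ is independent of $\pi$, so I would fix one convenient update sequence, say the one traversing $X_c$ from $u$ to $u'$ along the backbone and visiting leaves as they are encountered. Starting from the state with a single $1$ at hub $u$, I would trace the orbit explicitly: the threshold $\kup=1$ makes state $1$ maximally easy to spawn, so the lone $1$ initiates a pulse of activation that propagates across the backbone; each relay vertex is transiently held at state $1$ by its two pendant leaves, barely satisfying $\kdown=3$ for exactly one sweep before the leaves drop back to $0$ and the pulse advances. Arranging the gadgets so that each adds exactly one unit to the traversal time, the orbit returns to its starting state after precisely $c$ applications of $\F_\pi$, and a monotone statistic on the intermediate states (for example, the index of the right-most active backbone vertex together with a direction-of-travel flag) takes $c$ distinct values, excluding smaller periods.

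The main obstacle will be the design and placement of the four-vertex gadget: it must be rich enough to keep the pulse alive (so that at each sweep the wavefront has the two state-$1$ neighbors required to satisfy $\kdown=3$), yet not so symmetric that the orbit collapses onto a proper divisor of $c$. This delicate balance is exactly what the $\Delta=2$ bifurcation regime of Theorem~\ref{thm:asynch} opens up on tree topologies, paralleling the $(n-1)$-cycles on $\Circle_n$ obtained in Proposition~\ref{prop:circle}.
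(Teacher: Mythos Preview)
Your base case $c=3$ is correct, and the six-vertex tree you use is in fact isomorphic to the paper's $H_6$. Beyond that, however, the proposal is a plan rather than a proof: you never define the four-vertex ``relay gadget,'' never fix the update sequence or initial state for $c\ge 4$, and never trace the orbit. You explicitly flag the gadget design as ``the main obstacle,'' and indeed it is: at $\Delta=2$ the potential argument of Theorem~\ref{thm:asynch} is exactly tight, so an arbitrary degree-adjustment along the backbone can easily kill the pulse or collapse the period. Without a concrete gadget and a verified trajectory, nothing establishes that the period is $c$ (or even that it exceeds~$1$).

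The paper's construction is quite different from your backbone-with-gadgets idea and avoids the design problem entirely. It takes the $H$-tree $H_n$ to be two \emph{paths} of length $n/2$ joined by a single crossbar edge at their midpoints---no pendant leaves, no hubs, no relays---and grows $c$ by lengthening both arms simultaneously (each unit of $c$ adds four path vertices, matching the $4c-6$ count). The orbit is then traced explicitly: a contiguous block of $\eta=c-1$ ones on one arm migrates by a one-step ``left shift'' per sweep of $\F_\pi$, crosses the crossbar, and returns after exactly $c$ sweeps. Because the mechanism is a rigid shift of a fixed-width block along a path, the period is read off directly from the arm length, with no delicate gadget balancing required.
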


\begin{proof}
An $H$-tree on $n = 4\beta + 2$ vertices, denoted by $H_n$, has vertex
set $\{1,2,\ldots,n\}$ and edge set
\begin{equation*}
\{\eta,n-\eta+1\} \cup
\bigl\{ \{i,i+1\}, n/2 + \{i,i+1\} \mid 1\le i \le n/2-1  \bigr\} \;,
\end{equation*}
where $\eta=\beta+1$ and $\beta \ge 1$.  The graph $H_{n}$ is
illustrated in Figure~\ref{fig:h-tree}.
\begin{figure}
  \centerline{\includegraphics[]{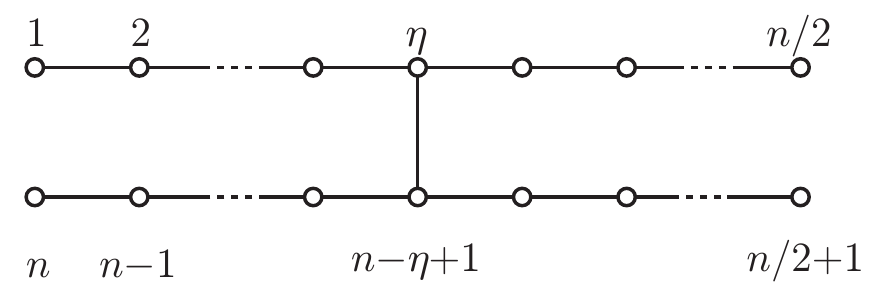}}
  \label{fig:h-tree}
  \caption{The tree $H_n$ used in the proof of Proposition~\ref{prop:h-tree}.}
\end{figure}

\medskip

Set $\beta=c-2$ so that $n=4\beta+2$ and $\eta=\beta+1$.  We take $X =
H_n$ as the graph and assign thresholds $(\kup,\kdown)=(1,3)$ to all
vertices.
By the comment preceding Proposition~\ref{prop:union}, we may simply
use $\pi=(1, 2, 3, \ldots, n)$ as update sequence since all
permutations give cycle equivalent maps $\F_\pi$.

For the initial configuration, set the state of each vertex $v$ in the range
$(n/2)+1 \le v \le n-\eta+1$ (bottom right branch) to~$1$ and
set all other vertex states to~$0$ so that
\begin{equation*}
x(0) = (0, 0, \dots, 0,
     \underbrace{1 , 1, \dots, 1,}_{{\text{start at vertex $(n/2)+1$}}}
   0, 0, \dots 0 )
\end{equation*}

The number of vertices in a contiguous vertex range with state 1 will
always be $\eta$; there may be one or two such groups in a system
state. The image of $x(0)$ is
\begin{equation*}
x(1)=(0, 0, \dots, 0,
     \underbrace{1 , 1, \dots, 1,}_{\text{start at vertex $\eta$}}
   0, 0, \dots 0) \;,
\end{equation*}
where now the first $\eta-1$ vertices are in state 0, the next $\eta$
vertices are in state 1, and the remaining vertices---all those along
the bottom arm---are in state 0, as follows.  Along the top arm,
vertices~$1$ through~$\eta-1$ will remain in state~$0$ because all
nodes and their neighbors are in state~$0$.  Vertex~$\eta$, the state
of the vertex incident to the crossbar on the top arm, will change
to~$1$ because its neighbor along the crossbar is in state~$1$.  For
the given permutation, then, each subsequent vertex $v_i$ in the range
$\eta+1$ through $n/2$ will change to state~$1$ because $x_{v_{i-1}}=1$
and $\kup=1$.  For the bottom arm, vertex $(n/2)+1$ will change from
state 1 to state 0 because $\sigma(x[v_{(n/2)+1}])=2 < \kdown$.  For
the same reason, each vertex $v_i$ in the range $(n/2)+2$ to
$n-\eta+1$ will transition to state 0.  Vertices from $n-\eta+2$
through $n$ will remain in state 0.

The next state is
\begin{equation*}
x(2)=(0, 0, \dots, 0,
     \underbrace{1 , 1, \dots, 1}_{\text{start at vertex }\eta-1},
     0, 0, \ldots 0,
     \underbrace{1 , 1, \dots, 1}_{\text{start at vertex } n-\eta+1}) \;,
\end{equation*}
where, for the top arm, the first $\eta-2$ vertices are in state 0,
the next $\eta$ vertices are in state 1, and the last vertex on the
top arm is in state 0.  That is, the set of 1's along the top arm has
shifted one vertex left, as follows.  Let the set of vertices in the
top arm in state 1 (in $x(1)$) be denoted $v_i$ through $v_{i+\eta}$.  Vertex
$v_{i-1}$ will transition $0 \rightarrow 1$ because $x_{v_i}=1$.
Vertex $v_i$ will remain in state 1 because $\sigma(x[v_i])=3=\kdown$.
Likewise $v_{i+1}$ through $v_{i+\eta-1}$ will remain in state 1.
However, $v_{i + \eta}$ will transition to state 0 because
$\sigma(x[v_{i + \eta}])=2<\kdown$.  We refer to this behavior as a
\emph{left-shift} (the analogous shift to the right is a
\emph{right-shift}).  For the bottom arm, the $\eta$ vertices (labels
$(n/2)+1$ through $n-\eta$) remain in state 0.  Vertex $n-\eta+1$
transitions to state 1 because the neighbor along the
crossbar is in state 1.  Subsequently, vertices $n-\eta+2$
through $n$ transition to state 1, in turn, according to $\pi$.

The next state is
\begin{equation*}
x(3)=(0, 0, \ldots, 0,
      \underbrace{1 , 1, \dots, 1}_{\text{start at vertex }\eta-2},
      0, 0, \ldots 0,
      \underbrace{1 , 1, \dots, 1}_{\text{start at vertex } n-\eta}) \;,
\end{equation*}
where the set of $\eta$ vertices in state 1 in the top arm has
shifted left, and the set of $\eta$ vertices in state 1 in the bottom arm has
shifted left.  The shifting process embodied in the transition from
state $x(2)$ to $x(3)$---where there is a group of vertices in state 1
in each of the top and bottom arms---can happen a total of $(\eta-2)$
times.  The state after these $(\eta-2)$ transitions is
\begin{equation*}
x(\eta)=(\underbrace{1 , 1, \dots, 1}_{\text{start at vertex 1}},
         0, 0, \ldots 0,
         \underbrace{1 , 1, \dots, 1,}_{\text{start at vertex } n-2\eta+3}
         0,0, \ldots, 0)\;.
\end{equation*}
The image of $x(\eta)$ is $x(0)$, the initial state.  There are
$2+(\eta-2)+1$ state transitions, and we have a limit cycle of length
$c=\eta+1$.
\end{proof}

Of course, the proof does not guarantee that $c$ is the minimal
periodic orbit size, nor that $H_n$ is the minimal order tree with a
periodic orbit of this length. Additionally, there may be multiple
periodic orbits of length $c$. The following proposition expands on
this in the case where $c\geq5$: there exists a tree of smaller order
than $H_n$ that also admits a $c$-cycle, namely the $Y$-trees.

\begin{proposition}
\label{prop:y-tree}
For any integer $c \geq 3$ there is a tree on $n=3c-2$ vertices such
that bi-threshold permutation SDS maps over this tree with thresholds
$\kup=1$ and $\kdown=3$ have periodic orbits of length $c$.
\end{proposition}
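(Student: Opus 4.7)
The plan is to construct an explicit family of trees, analogous to the $H$-trees of Proposition~\ref{prop:h-tree}, in which a single degree-$3$ junction $w$ joins three arms and supports a shifting-band dynamics of period~$c$. Concretely, I would let $Y_n$ denote the tree on $n=3c-2$ vertices consisting of three disjoint paths (``arms'') of $c-1$ vertices each, all incident to a common central vertex~$w$; this yields $3(c-1)+1=3c-2$ vertices, matching the stated order. All vertices carry thresholds $\kup=1$ and $\kdown=3$. Because $Y_n$ is a tree, $\kappa(Y_n)=1$, so by the result of~\cite{Macauley:09a} invoked just before Proposition~\ref{prop:h-tree}, all permutation SDS maps over $Y_n$ are cycle-equivalent, and it suffices to analyze a single convenient $\pi$ --- for definiteness, one whose labeling traverses each arm from tip to $w$ in turn.

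The main step is to exhibit an initial state $x(0)$ whose $\F_\pi$-orbit has exact period~$c$. Guided by the $H$-tree proof, I would place a contiguous band of 1's of length $\eta=c-1$ adjacent to $w$ on one arm, with all other vertices in state~$0$. The trace would then follow the same template as in Proposition~\ref{prop:h-tree}: an initial step in which the band transfers through $w$ onto another arm (creating or continuing a band via the $\kup=1$ rule), a sequence of shift steps on each active arm driven by the same $\kup=1$, $\kdown=3$ bookkeeping on $\sigma(x[v])$ as in the $H$-tree, and a final step in which the configuration recurs as $x(0)$. The transition count would decompose as $2+(\eta-2)+1=c$, exactly as in the $H$-tree accounting, but with transfers routed through the single junction $w$ rather than a crossbar edge. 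Verifying each of these local updates is routine once the global band picture is in hand, because every non-junction update depends on at most two path-neighbors.

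The main obstacle is handling the central vertex $w$ itself. Since $w$ has degree~$3$, the restricted state $x[w]$ has four entries and $\sigma(x[w])\in\{0,1,\ldots,4\}$, one more possible value than for any vertex in the $H$-tree. Consequently, for some states on the orbit two neighbors of $w$ may be in state~$1$ simultaneously, and one must check at each step that $w$ makes (or does not make) the correct transition --- neither turning on prematurely, which would break the shifting-band structure on an inactive arm, nor staying on long enough to synchronize all three arms, which would trap the system at a fixed point or a shorter cycle. Choosing the initial band placement, the position of $w$ in the update order, and the identity of the ``target'' arm so that $w$ behaves correctly at each of the $c$ steps is the delicate part of the argument. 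Once the orbit is identified, minimality of the period --- \ie the first return to $x(0)$ occurs at step~$c$ and not at any proper divisor --- follows from the fact that the band's leading endpoint strictly advances under each application of $\F_\pi$, so the intermediate states $x(1),\dots,x(c-1)$ are pairwise distinct.
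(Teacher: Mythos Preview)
Your plan is essentially the paper's proof: the same $Y$-tree with three arms of length $c-1$ meeting at a single degree-$3$ junction, the same appeal to $\kappa$-equivalence to reduce to one convenient permutation, and the same shifting-band mechanism to exhibit a period-$c$ orbit. The paper carries out the explicit trace with $\pi=(1,\ldots,n)$ and an initial band that \emph{includes} the junction vertex; one feature your sketch does not anticipate is that the orbit passes through the all-ones state $x(\beta)=(1,\ldots,1)$ before returning to $x(0)$, which is how the junction bookkeeping resolves itself.
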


\begin{proof}
The proof is analogous to the case of the $H$-tree. We take as the graph
the $Y$-tree on $n = 3\beta + 1$ vertices (see
Figure~\ref{fig:y-tree}) with $\beta\ge1$, which has vertex set
$\{1,2,\ldots,n\}$ and, setting $\eta = \beta+1$, edge set
\begin{equation*}
\bigl\{ \{i,i+1\} \mid 1\le i \le 2\eta - 2  \bigr\}
\cup
\bigl\{ \{i,i+1\} \mid 2\eta \le i \le (n-1)  \bigr\}
\cup
\{\eta, n\} \;.
\end{equation*}
\begin{figure}
  \centerline{\includegraphics[]{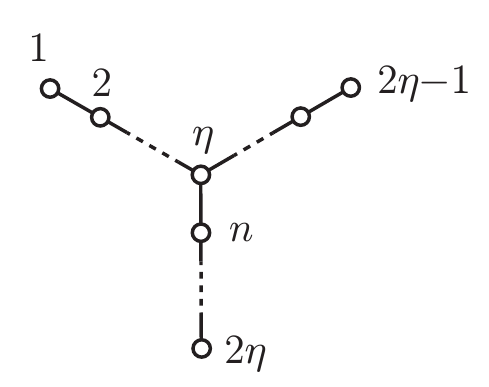}}
  \caption{The tree $Y_n$ used in the proof of
    Proposition~\ref{prop:y-tree}.}
    \label{fig:y-tree}
\end{figure}
Let $c\geq3$ with $n=3c-2$ so that $X = Y_n$ (and $c = \beta+1$).
We assign thresholds $(\kup,\kdown)=(1,3)$ to all vertices and
use update sequence $\pi=(1, 2, 3, \ldots, n)$ as before.  As the
initial configuration, set the states of the $\beta$ vertices $v$ in
the range $\eta \le v \le 2\eta-2$ (all vertices in the upper right
branch except $2\eta-1$) to~$1$, and set all other vertex states
to~$0$ to form
\begin{equation*}
x(0) = (0, 0, \ldots, 0,
     \underbrace{1 , 1, \dots, 1,}_{\text{start at vertex $\eta$}}
     0, 0, \dots 0 ) \;.
\end{equation*}
The image of $x(0)$ is
\begin{equation*}
x(1)=(0, 0, \dots, 0,
     \underbrace{1 , 1, \dots, 1,}_{\text{start at vertex $(\eta-1)$}}
   0, 0, \dots 0,1) \;,
\end{equation*}
where now the first $\eta-2$ vertices are in state~$0$, the next
$\beta$ vertices are in state 1, and the remaining vertices---except
for vertex $n$---are in state~$0$.  In the upper two branches, the
initial set of $\beta$ nodes in state~$1$ \emph{shifts left} for the
same reasons described in the proof of Proposition~\ref{prop:h-tree}.
The last vertex, $n$, will change to~$1$ because it is adjacent to
vertex $\eta$, which has state~$1$.

The image of $x(1)$ is
\begin{equation*}
x(2)=(0, 0, \ldots, 0,
     \underbrace{1 , 1, \dots, 1,}_{\text{start at vertex $(\eta-2)$}}
   0, 0, \ldots 0,1,1) \;,
\end{equation*}
where the $\beta$ nodes in state~$1$ beginning at vertex $\eta-2$ have
shifted left and vertex $n-1$ transitions to~$1$ because vertex $n$ is
in state~$1$.  Vertex~$n$ remains in state~1 because $\sigma(x[v_n])=3$.

The mechanics of the last state transition (the left shift of $\beta$
vertices and nodes transitioning to state~$1$ in the lower branch)
repeats itself a total of $\beta-2$ times, at which point the state is
\begin{equation*}
x(\beta-1)=(0, 1, \dots, 1,
     \underbrace{0 , 0, \dots, 0,}_{\text{start at vertex $(\eta+1)$}}
   \underbrace{1, 1, \dots, 1}_{\text{start at vertex $(n-\beta+2)$}})\;,
\end{equation*}
where the only vertex in the lower vertical branch in state~$0$ is
$2\eta$, the leaf node.

Noting that vertex $\eta$ remains in state~$1$ on the next transition
because $\sigma(x[v_\eta])=3$, all vertices in the upper right branch
transition to~$1$.  Vertex $2\eta$ also transitions to~$1$,
giving
\begin{equation*}
x(\beta)=(1, 1, \ldots, 1) \;.
\end{equation*}
The next state can be verified to be $x(0)$, thus completing the
cycle.  The cycle length is therefore $c = \beta+1$ as stated.
\end{proof}

Interestingly, there is no $H$-tree nor $Y$-tree that generates a
maximum orbit of size~2 for thresholds $(\kup,\kdown)=(1,3)$. However,
so-called $X$-trees (defined below) admit cycles of any size $c \ge 1$.

\begin{proposition}
\label{prop:x-tree}
For any integer $c \geq 2$ there is a tree $X$ on $n=4c-3$ vertices
such that bi-threshold permutation GDS maps over $X$ with thresholds
$\kup=1$ and $\kdown=3$ have periodic orbits of length $c$.  For
$c=1$, there is a tree $X$ on $n=5$ vertices that has periodic orbits
of length~1 (fixed points).
\end{proposition}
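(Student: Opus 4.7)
The strategy is to define the $X$-tree family $X_n$ on $n = 4c - 3$ vertices as a central vertex of degree four connected to four path arms whose lengths sum to $4c - 4$, with labels chosen so that the arms are visited consecutively under $\pi = (1, 2, \ldots, n)$. By the $\kappa$-equivalence result recalled at the start of this subsection, the periodic orbit structure of a permutation SDS over a tree does not depend on the particular choice of $\pi$, so this choice entails no loss of generality.

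The proof then mirrors the arguments in Propositions~\ref{prop:h-tree} and~\ref{prop:y-tree}. I would choose an initial state $x(0)$ consisting of a contiguous block of $1$s occupying one designated arm, with all remaining vertex states equal to $0$. The key local fact, already exploited twice, is that along each arm bi-threshold dynamics with $(\kup, \kdown) = (1, 3)$ shift such a block leftward by one position per time step: the idle $0$ at the front of the block sees $\sigma \ge 1 = \kup$ and flips to $1$, while the trailing $1$ at the back sees $\sigma < 3 = \kdown$ and flips to $0$. I would then iterate $\F_\pi$ and trace the block as it threads through the tree, passing from one arm to the next through the central vertex, and verify that the orbit returns to $x(0)$ after exactly $c$ applications of $\F_\pi$, and not sooner.

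The main obstacle is the behaviour at the central vertex of degree four. With $\kup = 1$ and $\kdown = 3$, the centre switches on as soon as any neighbour is in state $1$, but remains in state $1$ only while at least two of its four neighbours are in state $1$. The arm lengths and the initial state must be coordinated so that, at each step, (i) the centre ignites exactly the ``next'' arm rather than all of them simultaneously, and (ii) the centre switches back to $0$ precisely when the block has fully left its vicinity. Verifying this step-by-step compatibility, together with ruling out a shorter return to $x(0)$ under a proper divisor of $c$, is the delicate but routine content of the argument.

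For the degenerate case $c = 1$, the claim is immediate: the all-zero configuration is a fixed point of any bi-threshold GDS with $\kup \ge 1$, so any tree on five vertices---for instance the star $K_{1,4}$---witnesses the claim.
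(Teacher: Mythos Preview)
Your high-level setup matches the paper exactly: the same $X$-tree with a degree-four centre and four equal arms of length $\beta = c-1$, the same reduction to $\pi = (1,2,\ldots,n)$ via $\kappa$-equivalence, the same left-shift mechanism along an arm, and the same all-zero fixed point for $c=1$.

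The gap is in your picture of what happens at the centre. You expect a single block to ``thread through the tree, passing from one arm to the next,'' and you list as a verification goal that ``the centre ignites exactly the next arm rather than all of them simultaneously.'' That is not what occurs, and with $\kup = 1$ it cannot occur: once the centre is in state~$1$, every centre-adjacent arm vertex that is updated after the centre in $\pi$ will switch on. In the paper's labelling, the very first application of $\F_\pi$ to $x(0)$ lights up all of branch~4 (whose centre-adjacent vertex comes first within that branch) and simultaneously one vertex of branch~3 (whose centre-adjacent vertex comes last within that branch). From then on there are \emph{two} blocks left-shifting in parallel, one across branches~1--2 and one across branches~3--4; after $\beta$ steps they coalesce into a nearly all-one state, which on the next step collapses back to $x(0)$. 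This gives period $c = \beta + 1$. Your single-block-circulating picture would instead suggest a period on the order of $4\beta$, so it also predicts the wrong orbit length.

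A second point: the paper handles $c=2$ (so $n=5$, arms of length~$1$) separately by exhibiting an explicit $2$-cycle, since with single-leaf arms the block-shift description degenerates. Your sketch does not single this case out.
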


\begin{proof}
An $X$-tree on $n = 4\beta + 1$ vertices with $\beta\ge1$ has vertex
set $\{1,2,\ldots,n\}$ and edge set as illustrated in
Figure~\ref{fig:x-tree}. Here $\eta=\beta+1$ is the unique
vertex of degree~4.
\begin{figure}
  \centerline{\includegraphics[]{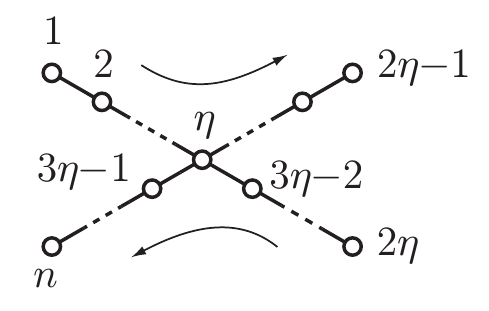}}
  \caption{The tree $X_n$ used in the proof of
    Proposition~\ref{prop:x-tree} (arrows indicate vertex labeling
    order).}
  \label{fig:x-tree}
\end{figure}
Note first that for any $n$ the all-zero state over $X_n$ is a fixed
point.

\medskip

We treat the case $c=2$ separately; use $X=X_5$, $\pi=(1,2,3,4,5)$, and
$(\kup,\kdown)=(1,3)$. It can easily be verified that
$x(0)=(0,1,1,0,0)$ is mapped to $x(1)=(1,1,0,1,1)$ which in turn is
mapped to $x(0)$, constituting a $2$-cycle.

\medskip

Fix $c \geq 3$, set $n=4c-3$ and then $c=\beta+1$, take as the graph $X = X_n$ with thresholds
$(\kup,\kdown)=(1,3)$ for all vertices, and let $\pi=(1, 2, 3, \ldots,
n)$.

Define the initial configuration $x(0)$ by assigning the $\beta$
vertices $v$ with $\eta \le v \le 2\eta-2$ (all vertices in the upper
right branch except $2\eta-1$) to~$1$ and set all other vertex states
to~$0$, that is,
\begin{equation*}
x(0) = (0, 0, \dots, 0,
     \underbrace{1 , 1, \dots, 1,}_{\text{start at vertex $\eta$}}
   0, 0, \dots 0 ) \;.
\end{equation*}
The image of $x(0)$ is
\begin{equation*}
x(1)=(0, 0, \dots, 0,
     \underbrace{1 , 1, \dots, 1,}_{\text{start at vertex $(\eta-1)$}}
   0, 0, \dots 0,\underbrace{1 , 1, \dots, 1}_{\text{start at vertex $3\eta-2$}}) \;,
\end{equation*}
where now the first $\eta-2$ vertices are in state 0, the next $\beta$
vertices are in state 1, and the remaining vertices in branch~2 are in
state~$0$.  In branch~3, only the vertex neighboring vertex $\eta$
transitions to state~$1$, while all vertices in branch~4 transition to
state~$1$ because $\eta$ is in state~$1$.

State $x(2)$ is generated by a left-shift of the $\beta$ contiguous
states that are 1 in branches 1 and 2, and by a left-shift of the
$\beta+1$ contiguous state-1 vertices in branches~3 and~4, that is,
\begin{equation*}
x(2)=(0, 0, \ldots, 0,
     \underbrace{1 , 1, \dots, 1,}_{\text{start at vertex $(\eta-2)$}}
     0, 0, \ldots 0,
     \underbrace{1 , 1, \dots, 1,}_{\text{start at vertex $3\eta-3$}}0) \;.
\end{equation*}
From $x(1)$ there are $\beta-2$ such transitions that result in the
state
\begin{equation*}
x(\beta-1)=(0,
     \underbrace{1 , 1, \dots, 1,}_{\text{start at vertex $2$}}
   0, 0, \dots 0,\underbrace{1 , 1, \dots, 1,}_{\text{start at vertex $3\eta-\beta$}}
   0, 0, \dots, 0) \;.
\end{equation*}
The next transition results in all vertices in branches 1 and 2 in
state~$1$ since $\eta$ remains in state 1.  The contiguous
set of $\beta+1$ vertices in branches 3 and 4 shift left, giving
\begin{equation*}
x(\beta)=(1 , 1, \dots, 1,
   \underbrace{0 , 0, \dots, 0}_{\text{start at vertex $3\eta$}}) \;.
\end{equation*}
The image of $x(\beta)$ is $x(0)$, and, since $\beta$ is the smallest
positive time step with this property, we have established the
presence of a periodic orbit of length $c=\beta+1$.
\end{proof}

Finally, we consider a special class of bi-threshold SDSs on trees
with $\kup=1$ and $\kdown = \kdown(v) = d(v)+1$ for each vertex $v$.
Note that the down-threshold for each vertex depends on its degree as
indicated by the index $v$ in $\kdown(v)$. We show that such
bi-threshold SDS maps always have fixed points. In such systems, the
state of a vertex $v$ switches from 0 to 1 if it has at least
one neighbor in state 1, and from 1 to 0 if it has at least one
neighbor in state 0. This is an interesting contrast to the classes of
bi-threshold SDSs on trees discussed above which have large limit
cycles.

Let $X$ be a tree. We choose some arbitrary vertex $r\in v[X]$ as its
root, and partition $X$ into levels $X_0, X_1,\ldots, X_D$ with
respect to $r$ such that $X_0=\{r\}$, and for any $i\geq 0$, we let
$X_{i+1}$ be the set of vertices adjacent to vertices in set $X_i$,
but not in the set $\cup_{j<i} X_j$. We sometimes refer to $X_i$ as
level-$i$ set. Let $D$ be the number of levels. We can also define a
parent-child relationship relative to this rooted tree, and denote
$p(v)$ as the parent of vertex $v\neq r$.  In our arguments below, we
use any permutation $\pi$ of $v[X]$, which consists of all the
vertices in $X_i$ before those in $X_{i-1}$ for each $i$.  Our result
is based on the following property.

\begin{lemma}
\label{lemma:pv}
Consider a bi-threshold SDS $\F_\pi$ on a tree $X$ with an arbitrary
root $r$ and permutation $\pi$ as defined above where $\kup=1$ and
$\kdown(v) = d(v)+1$ for each vertex $v$.  Let $x$ be any state vector
and $x'=\F_\pi(x)$. For each vertex $v$ other than the root, we have
$x'_v = x_{p(v)}$.
\end{lemma}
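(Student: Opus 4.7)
The plan is to induct on the level of $v$, proceeding from the deepest level $D$ up toward the root. Before the induction I would record the concrete behavior of the update rule under $\kup=1$ and $\kdown(v)=d(v)+1$: if $x_v=0$, then $f_v$ outputs $1$ iff at least one neighbor of $v$ is in state~$1$; if $x_v=1$, then $f_v$ outputs $1$ iff every neighbor of $v$ is also in state~$1$, so it flips to $0$ as soon as any neighbor is in state~$0$. This ``one versus all'' asymmetry is what powers the argument.

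For the base case, if $v$ is a leaf then $p(v)$ is its only neighbor, and both branches of the update rule produce $x'_v = x_{p(v)}$ directly. For the inductive step, let $v$ lie at level $i<D$ with children $c_1,\dots,c_k$ at level $i+1$. Since $\pi$ processes all of $X_{i+1}$ before any vertex of $X_i$, each $c_j$ is updated strictly before $v$, while $p(v)\in X_{i-1}$ is updated strictly after $v$. Crucially, at the moment any child $c_j$ is processed, $v$ itself has not yet been touched and still carries its initial state $x_v$. Applying the inductive hypothesis to $c_j$ then gives $x'_{c_j}=x_{p(c_j)}=x_v$, so when $v$'s turn comes, all children of $v$ are in state $x_v$ and $p(v)$ is still in state $x_{p(v)}$.

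The conclusion follows from a brief case split on $x_v$. If $x_v=0$, every child contributes $0$, so $v$ flips to $1$ precisely when $x_{p(v)}=1$. If $x_v=1$, every child contributes $1$, so $v$ stays at $1$ precisely when $x_{p(v)}=1$ and otherwise flips to $0$. Either way, $x'_v=x_{p(v)}$, closing the induction. The only point that needs real care---rather than any genuine obstacle---is the bookkeeping that ensures the children of $v$ have already ``committed'' to reflect $x_v$ while $p(v)$ has not yet moved; this is exactly what the level-by-level form of $\pi$ buys, together with the degree-matched choice $\kdown(v)=d(v)+1$ that lets the outcome depend solely on the parent.
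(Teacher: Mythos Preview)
Your proof is correct and follows essentially the same approach as the paper: induction on the level of $v$ from $X_D$ upward, using that $\pi$ updates children before parents so that, by the inductive hypothesis, all children of $v$ already carry state $x_v$ when $v$ is processed, reducing the outcome to the parent's state. Your write-up is in fact slightly more explicit than the paper's---you spell out the ``one versus all'' behavior of the rule and the final case split on $x_v$, whereas the paper compresses this into the phrase ``it has the same values as its children, and therefore, takes on the state of $p(v)$.''
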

\begin{proof}
Our proof is by induction on the levels, starting from the highest,
i.e., $X_D$. For the base case, consider a leaf $v\in X_D$. We have
four cases: $x_v=x_{p(v)}=1$, $x_v=0, x_{p(v)}=1$, $x_v=1, x_{p(v)}=0$
and $x_v=x_{p(v)}=0$.  It is easy to verify that in the first two
cases, we have $x'_v=1$ and in the latter two cases, we have $x'_v=0$,
since vertex $v$ is updated before $p(v)$ in $\pi$. Therefore, the
statement of the lemma holds in the base case for all vertices $v\in
X_D$.

Next, consider a vertex $v$ in some level $X_j$, $j<D$. If $v$ is a
leaf in $X_j$, the lemma follows by exactly the same argument as in
the base case. Therefore, consider the case $v$ is not a leaf. Let
$w_1,\ldots,w_c$ denote its children.  Since level $j+1$ vertices are
updated before those in level $j$ in $\pi$, by induction, we have
$x'_{w_i}=x_v$ for each $w_i$. Again, we have a case similar to the
base case: when vertex $v$ is updated, it has the same values as its
children, and therefore, takes on the state of $p(v)$. Thus, the lemma
follows.
\end{proof}

This property immediately gives us the following:
\begin{corollary}
Let $X$ be a tree. Let $\pi\in S_x$ and let $(f_v)_v$ be bi-threshold
functions satisfying $\kup=1$ and $\kdown(v)=d(v)+1$ for each
vertex $v$. Any SDS map $\F_\pi$ only has
fixed points as limit sets.
\end{corollary}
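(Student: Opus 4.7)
The plan is to reduce from an arbitrary update sequence $\pi$ to the particular level-based permutation from Lemma~\ref{lemma:pv} using the $\kappa$-equivalence theory recalled before Proposition~\ref{prop:h-tree}, and then to iterate the lemma while separately establishing that the root is a fixed coordinate. Since $X$ is a tree one has $\kappa(X)=1$, so any two permutations in $S_X$ are $\kappa$-equivalent and induce SDS maps with the same periodic orbit structure. It therefore suffices to prove the statement for the specific $\pi$ of Lemma~\ref{lemma:pv} that updates vertices in $X_D, X_{D-1}, \ldots, X_0$ in that order.

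Next, I would show that for this $\pi$ the root $r$ is a fixed coordinate of $\F_\pi$. When $r$ is updated, it is the last vertex to be processed, so every child $w_i \in X_1$ has already been updated and, by Lemma~\ref{lemma:pv} applied at level~$1$, now carries the value $x_{p(w_i)} = x_r$. Thus the closed neighborhood of $r$ is uniformly equal to $x_r$, giving a count of $0$ if $x_r=0$ and $d(r)+1$ if $x_r=1$. Comparing with $\kup = 1$ and $\kdown(r) = d(r)+1$ shows that the bi-threshold rule preserves $x_r$ in either case, so $(\F_\pi(x))_r = x_r$ for every $x$.

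Combining the lemma with the fixity of the root and doing a straightforward induction on $t$ yields the closed form
\begin{equation*}
(\F_\pi^t(x))_v = x_{p^t(v)}\;,
\end{equation*}
where $p^t(v)$ denotes the $t$-fold ancestor of $v$ in the rooted tree, with the convention that $p^t(v) = r$ whenever $t \ge \mathrm{level}(v)$. Taking $t = D$, the diameter-like depth of the tree, we conclude that $\F_\pi^D(x)$ is the constant state with value $x_r$ at every vertex. The all-zero state is a fixed point because each vertex has count $0 < \kup$, and the all-one state is a fixed point because each vertex $v$ has count $d(v)+1 = \kdown(v)$; hence $\F_\pi^D(x)$ is a fixed point and the limit set of every~$x$ is a singleton.

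The one step that requires genuine attention is the root analysis in the second paragraph: one has to be careful that the lemma is being applied at the correct time (i.e., to the children \emph{after} their update within the current sweep, consistent with the level ordering in $\pi$). Everything else reduces to propagating Lemma~\ref{lemma:pv} through $t$ iterations, which is routine bookkeeping once the root is pinned down.
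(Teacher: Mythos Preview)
Your proof is correct and follows essentially the same route as the paper: reduce to the level-ordered permutation via $\kappa$-equivalence on trees, then iterate Lemma~\ref{lemma:pv} to see that after at most $D$ sweeps the state becomes constant equal to $x_r$, which is a fixed point. The paper's argument is terser---it asserts directly that levels $0$ and $1$ agree with $x_r$ after one sweep and then inducts on the level index---whereas you make explicit the step that $(\F_\pi(x))_r = x_r$ (which the paper leaves implicit) and package the iteration as the closed form $(\F_\pi^t(x))_v = x_{p^t(v)}$; these are cosmetic differences, not a different approach.
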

\begin{proof}
Without loss of generality, we take $\pi$ to be the permutation in
Lemma~\ref{lemma:pv}.  By applying Lemma~\ref{lemma:pv}, it is easy to
verify that for any state vector $x$, all the vertices in levels 0 and
1 have the same state value in $F(x)$, namely $x_r$. By induction on
$i$, it is easy to verify that for any $i\geq 1$, all vertices in
levels $0,\ldots,i$ have the same state value (of $x_r$) in
$F^i(x)$. The statement follows since all permutations for a tree give
cycle equivalent SDS maps.
\end{proof}


\section{Summary and Conclusion}
\label{sec:conclude}

This paper has analyzed the structure of $\omega$-limit sets of
bi-threshold GDS. Unlike the synchronous case, bi-threshold SDS maps
can have long periodic orbits, and this is characterized in terms of
the difference of the up- and down-thresholds. We also analyzed
certain classes of trees. The following is a list of questions and
conjectures for possible further research.

\subsection{Embedding and Inheritance of Dynamics}

A fundamental question in the study of GDSs is the following: if a
graph $X$ has a graph $X'$ as an induced subgraph, what are the
relations between the dynamics over the two graphs? Here one has to
assume that the vertex function, and update sequences if applicable,
are appropriately related. For example, is there a projection from the
phase space of the GDS over $X$ to the one over $X'$?

In initial computational experiments we studied the dynamics for
bi-threshold GDS over trees obtained from, e.g. $H$-trees by adding a
collection of edges - results indicate that there are several classes
of outcomes. While this is hardly a surprise, there are clear patterns
in how edges are added and the dynamics that result. For example, some
classes of edge additions give trees that have long periodic orbits
just as in the case of $H$-trees. For other classes of edge additions,
however, the addition of even a single edge causes all periodic orbits
of size $\ge 2$ to disappear.
Further insight into the mechanisms involved could shed light on the
the fundamental question above.

\subsection{Minimality of Trees with Given Periodic Orbit Sizes}

Our results above on the existence of trees admitting bi-threshold SDS
with given periodic orbit sizes are not necessarily minimal. For a
given $c\ge 1$ there is an $X$-tree with a periodic orbit of length
$c$, but there may be a smaller tree (or graph in general) which
admits periodic orbits of size~$c$ as well. While we have obtained
some insight on this via sampling, no firm results have been
established.

\medskip

\noindent\textbf{Note.} For all computational experiments involving
dynamics of SDS maps over graphs in this paper we used a variant of
InterSim~\citep{cjk:kkmmstrr-2011d}.


\acknowledgements
\label{sec:ack}

We thank our external collaborators and members of the Network
Dynamics and Simulation Science Laboratory (NDSSL) for their
suggestions and comments.  This work has been partially supported by
NSF Nets Grant CNS-0626964, NSF HSD Grant SES-0729441, NSF PetaApps
Grant OCI-0904844, NSF NETS Grant CNS-0831633, NSF REU Supplement
Grant CNS-0845700, NSF Netse Grant CNS-1011769, NSF SDCI Grant
OCI-1032677, DTRA R\&D Grant HDTRA1-0901-0017, DTRA CNIMS Grant
HDTRA1-07-C-0113, DOE Grant DE-SC0003957, US Naval Surface Warfare
Center Grant N00178-09-D-3017 DEL ORDER 13, NIH MIDAS project
2U01GM070694-7 and NIAID \& NIH project HHSN272201000056C.


\appendix

\section{Limit Cycle Structure for Standard Threshold Cellular Automata}
\label{sec:standard}

This appendix section contains a condensed version of the proof
from~\cite{Goles:81} for standard threshold functions. We have
incorporated their proof for two reasons. First, only a portion of the
original proof needs to be adapted to cover bi-threshold systems, and
in this way the paper becomes self-contained. Second, the original
proof only appears in French, and we here provide an English version.
Let $K = \{0,1\}$, let $A=(a_{ij})_{i,j=1}^n$ be a real symmetric matrix, let
$\theta = (\theta_1,\dots,\theta_n) \in\R^n$, and let $\F = (f_1,
\ldots, f_n) \colon K^n \longrightarrow K^n$ be the function defined
coordinate-wise by
\begin{equation}
  \label{eq:genthreshold}
  f_i(x_1,\ldots,x_n) =
  \begin{cases}
    0,  & \text{if } \sum\limits^n_{j=1} a_{ij} x_j < \theta_i  \\
    1,  & \text{otherwise} \;.
  \end{cases}
\end{equation}

\begin{theorem}
\label{thm:thres-gca}
For all $x \in K^n$, there exists $s\in\N$ such that $\F^{s+2}(x) =
\F^{s}(x)$.
\end{theorem}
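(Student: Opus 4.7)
The plan is to use a Lyapunov (``energy'') function approach. I would define
\begin{equation*}
E(x, y) = -\sum_{i,j=1}^n a_{ij} x_i y_j + \sum_{i=1}^n \theta_i (x_i + y_i)
\end{equation*}
for $x,y\in K^n$, and track $E$ along the orbit via $E_t := E(x(t+1), x(t))$, where $x(t) = \F^t(x)$. Because $A = A^T$, a re-indexing of the bilinear term produces the telescoping identity
\begin{equation*}
E_{t} - E_{t-1} = \sum_{i=1}^n \bigl(x_i(t+1) - x_i(t-1)\bigr)\Bigl(\theta_i - \sum_{j=1}^n a_{ij} x_j(t)\Bigr).
\end{equation*}

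First I would show that each summand on the right is non-positive, so that $(E_t)$ is non-increasing. By the update rule~\eqref{eq:genthreshold}, $x_i(t+1) = 1$ forces $\sum_j a_{ij} x_j(t) \ge \theta_i$ while $x_i(t+1) = 0$ forces the strict inequality $\sum_j a_{ij} x_j(t) < \theta_i$. Combining this with the four possible values of $(x_i(t-1), x_i(t+1)) \in \{0,1\}^2$ shows each summand is $\le 0$, with strict inequality exactly in the case $x_i(t-1) = 1$, $x_i(t+1) = 0$.

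Since $K^n$ is finite, $(E_t)$ takes only finitely many values and therefore stabilises, say from time $s$ onward. On this plateau every summand must vanish, so the strict-inequality case above can never arise for $t \ge s$; equivalently, $x_i(t+1) \ge x_i(t-1)$ for every $i$ and every $t \ge s$. Splitting by parity, the sub-sequences $x_i(s), x_i(s+2), x_i(s+4), \ldots$ and $x_i(s+1), x_i(s+3), \ldots$ are componentwise non-decreasing in $\{0,1\}^n$, so each stabilises in at most $n$ steps. Consequently $\F^{s'+2}(x) = \F^{s'}(x)$ for some $s' \ge s$, which is the desired conclusion.

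The hard part is the boundary case of the sign analysis: a $0 \to 1$ vertex transition contributes zero to $E_t - E_{t-1}$ exactly when the threshold is hit with equality, so the energy can plateau without the orbit already having period $\le 2$. This is why the ``plateau $\Rightarrow$ two-step monotonicity $\Rightarrow$ stabilisation'' chain is needed; it works because the strict-versus-non-strict asymmetry of the test~\eqref{eq:genthreshold} forbids $1 \to 0$ transitions on the plateau, forcing the even- and odd-time sub-sequences to be monotone and therefore eventually stationary.
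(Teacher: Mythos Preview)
Your argument is correct. The telescoping identity for $E_t-E_{t-1}$ follows exactly as you say from the symmetry of $A$, the sign analysis of the four cases is right (with strict negativity precisely in the $(1,0)$ case because of the strict inequality in~\eqref{eq:genthreshold}), and the plateau $\Rightarrow$ coordinate-wise two-step monotonicity $\Rightarrow$ stabilisation chain is sound. This is essentially the Goles--Martinez / Poljak--S\r{u}ra energy argument.

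The paper, however, follows the original Goles--Olivos (1981) route, which is structurally different. Rather than tracking a scalar energy along the whole orbit, it works directly on a putative periodic orbit of period $T\ge 3$: one forms the antisymmetric bilinear operator $L(z_i,z_j)=a_{ij}\sum_l (z_j(l+1)-z_j(l-1))z_i(l)$ on the rows of the orbit matrix, partitions $\supp(z_i)$ into maximal ``runs'' $C_0,\ldots,C_p$ along the even/odd sublattice, and shows that each nontrivial run contributes a strictly negative amount $\Psi_{ik}$ to $\sum_j L(z_i,z_j)$. Antisymmetry then gives the contradiction $0=\sum_{i,j}L(z_i,z_j)<0$.

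Your approach is shorter and more transparent for the standard-threshold case; the price is the extra plateau-monotonicity step needed to handle the equality boundary $\sum_j a_{ij}x_j(t)=\theta_i$. The paper's combinatorial decomposition is heavier machinery here, but it is exactly what is reused in Theorem~\ref{thm:bithres-gca}: for bi-threshold functions the relevant threshold at each end of a run $C_k$ depends on whether $z_i$ is $0$ or $1$ just outside $C_k$, so the runs acquire ``types'' $00,01,10,11$ and one must count them (Corollary~\ref{cor:m01-m10}) to make the signs cancel. Your energy function with a single $\theta_i$ does not see this distinction, so while it settles Theorem~\ref{thm:thres-gca} cleanly, it would need nontrivial modification to cover the bi-threshold extension that motivates the paper.
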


The proof of this theorem is based on two lemmas which are given
below. Note first that since $K^n$ is finite, for each $x \in K^n$
there exist $s,T\in\mathbb{N}$ (they will generally depend on $x$) with
$T>0$ such that
\begin{equation*}
  \F^{s+T}(x) = \F^{s}(x)
  \quad \text{and} \quad
  \F^{s+r}(x) \neq \F^{s}(x)
\end{equation*}
for all $0 < r < T$. Here~$s$ is the transient length of the state~$x$.
Next define the $n \times T$ matrix $X(x,T) = (\F^{s}(x),
l\dots, \F^{s+T-1}(x))$ by
\begin{displaymath}
X(x,T)=\left(
\begin{array}{ccc}
z_1(0)  &\dots & z_1(T-1)  \\
\vdots &\cdots &\vdots\\
z_n(0)  &\dots & z_n(T-1)
\end{array}
\right)\;,
\end{displaymath}
where $\F^{s}(x) = z = (z_1(0),\dots,z_n(0))$ and $\F^{s+T-1}(x) =
(z_1(T-1),\dots,z_n(T-1))$. In other words,~$z$ denotes the first
periodic point reached from $x$ (after~$s$ steps) and its period
is~$T$. The columns of $X(x,T)$ are the $T$ successive periodic points
of the cycle containing $z$.


In general we have
\begin{displaymath}
\F^{s+l}(x) = (z_1(l),\dots,z_n(l)) \text{ for } 0 \le l \le T-1 \;.
\end{displaymath}
Since
\begin{displaymath}
\F^{s}(x) =  \F^{s+T}(x) = \F(z_1(T-1), \ldots, z_n(T-1))
\end{displaymath}
we have $z_i(0) = f_i(z_1(T-1),\ldots,z_n(T-1))$, and from
$\F^{s+l+1}(x) = \F(\F^{s+l}(x))$ we have
\begin{equation*}
  z_i(l+1) = f_i(z_1(l),\dots,z_n(l)) \text{ for } l=0,\ldots,T-2.
\end{equation*}
We will call $z_i$ the $i{}^{\text{th}}$ row of the matrix $X(x,T)$
and let $\gamma_i$ denote the smallest divisor of $T$ such that
$z_i(l+\gamma_i) = z_i(l)$ for $l\in\{0,\dots,T-1\}$, and will say
that $\gamma_i$ is the period of the component $z_i$.  Clearly, we
have $z_i(l+T) = z_i(l)$ for $i\in\{1,2,\ldots,n\}$ and all
$l\in\{0,\dots,T-1\}$.
Let $S = \{z_1,\dots,z_n\}$ be the set of rows of~$X(x,T)$. We define
the operator $L \colon S\times S\rightarrow\mathbb{R}$ by
\begin{equation*}
  L(z_i, z_j) = a_{ij} \sum^{T-1}_{l=0} (z_j(l+1) - z_j(l-1)) z_i(l)\;,
\end{equation*}
with indices taken modulo~$T$.
\begin{lemma}
\label{lem:tprop1}
The operator $L$ has the following properties:
\begin{itemize}
\item[($i$)] $L(z_i,z_j)+L(z_j,z_i) = 0$ for $i,j \in \{1,\ldots,n\}$
  (anti-symmetry).
\item[($ii$)] If $\gamma_i\le 2$ then $L(z_i,z_j)=0$ for $j \in
  \{1,\ldots,n\}$.
\end{itemize}
\end{lemma}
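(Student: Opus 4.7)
The proof of both statements is an exercise in manipulating finite sums with indices taken modulo~$T$, exploiting two facts: the matrix $A$ is symmetric, and each sequence $z_i$ is $T$-periodic. I would handle the two parts separately.

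For part~($i$), I would expand
\begin{equation*}
L(z_i,z_j) + L(z_j,z_i) = a_{ij}\sum_{l=0}^{T-1}\bigl[z_j(l+1)z_i(l) - z_j(l-1)z_i(l) + z_i(l+1)z_j(l) - z_i(l-1)z_j(l)\bigr],
\end{equation*}
using $a_{ji} = a_{ij}$. I would then reindex modulo~$T$: the substitution $l\mapsto l-1$ in the first term turns it into $\sum_l z_j(l)z_i(l-1)$, which cancels the fourth term; and $l\mapsto l+1$ in the second term turns it into $-\sum_l z_j(l)z_i(l+1)$, which cancels the third term. Hence the total is zero. So the only inputs needed are the symmetry of $A$ and the periodicity used to justify each reindexing (which amounts to a cyclic shift of the summation range).

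For part~($ii$), the key observation is that $\gamma_i \le 2$ implies $z_i(l-1) = z_i(l+1)$ for every~$l$: if $\gamma_i = 1$ then $z_i$ is constant, and if $\gamma_i = 2$ then $z_i(l+1) = z_i(l+1-2) = z_i(l-1)$. Starting from
\begin{equation*}
L(z_i,z_j) = a_{ij}\Bigl(\sum_{l=0}^{T-1} z_j(l+1)z_i(l) - \sum_{l=0}^{T-1} z_j(l-1)z_i(l)\Bigr),
\end{equation*}
I would reindex $l\mapsto l-1$ in the first sum and $l\mapsto l+1$ in the second (both valid modulo~$T$) to obtain
\begin{equation*}
L(z_i,z_j) = a_{ij}\sum_{l=0}^{T-1} z_j(l)\bigl(z_i(l-1) - z_i(l+1)\bigr) = 0,
\end{equation*}
where the last equality uses the observation above.

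There is no genuine obstacle here; the only care required is with the index shifts, and the entire argument is driven by $T$-periodicity together with the symmetry of the coupling matrix. Both parts are essentially discrete analogues of integration-by-parts identities on a cyclic group.
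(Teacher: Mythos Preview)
Your proof is correct and follows essentially the same approach as the paper: both parts rely on the symmetry of $A$ together with $T$-periodicity to justify index shifts and obtain the cancellations. Your treatment of part~($ii$) is marginally tidier in that you unify the cases $\gamma_i=1$ and $\gamma_i=2$ via the single observation $z_i(l-1)=z_i(l+1)$, whereas the paper handles them separately; but the underlying mechanism is identical.
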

\begin{proof}
For~($i$), since $a_{ij} = a_{ji}$, we have
\begin{align*}
L(z_i, z_j) + L(z_j, z_i) =
  a_{ij} \sum^{T-1}_{l=0} \bigl(
     &[z_i(l)z_j(l+1) - z_i(l-1)z_j(l)] \\+& [z_i(l+1)z_j(l) - z_i(l)z_j(l-1)]
   \bigr)\;,
\end{align*}
which clearly evaluates to zero due to periodicity. For part~($ii$), if
$\gamma_i = 1$ then the row $z_i$ is constant and $L(z_i, z_j) = 0$. If
$\gamma_i = 2$ then the value of $z_i$ alternates as
\begin{equation*}
z_i(0), z_i(1), z_i(0), z_i(1), \ldots, z_i(0), z_i(1)
\end{equation*}
across the $i^{\text{th}}$ row, and the terms in $L(z_i, z_j)$ cancel
in pairs.
\end{proof}

Let $z_i\in S$ and suppose in the following that $\gamma_i \ge 3$. We
set
\begin{equation*}
  \supp(z_i) = \{ l\in\{0,\ldots,T-1\} : z_i(l) = 1\}\;,
\end{equation*}
and write $\mathcal{I}(l) = \{l, l+2, l+4, \ldots, l-4, l-2\}$. Next, set
\begin{equation*}
C_0 =
\begin{cases}
\varnothing,& \text{ if there is no }l_0\in\{0,\dots,T-1\} \text{ such
  that } \mathcal{I}(l_0) \subset \supp(z_i)\\
  \mathcal{I}(l_0),& \text{otherwise}.
\end{cases}
\end{equation*}
We define $C_1$ as the set
\begin{equation*}
C_1 = \{l_1+2s \in \supp(z_i) : s = 0,1,\ldots, q_1\} \;,
\end{equation*}
where $l_1$ is the smallest index not in $C_0$ satisfying $z_i(l_1-2)
= 0$ and $q_1$ satisfies $z_i(l_1+2q_1+2) = 0$.
For $k \ge 2$ we define the sets $C_k$ by
\begin{equation*}
C_k = \{ l_k + 2s \in \supp(z_i) : s = 0,1,\dots,q_k\}\;,
\end{equation*}
where $l_k = l_{k-1} + r \pmod T \notin \{l_1,\ldots,l_{k-1}\}$ is
the smallest index for which $z_i(l_k-2) = 0$ and $q_k$ satisfies
$z_i(l_k+2q_k+2) = 0$.

Since $\gamma_i \ge 3$ (assumption), there always exists $l_1\in
\supp(z_i)$ for which $z_i(l_1-2) = 0$. This allows us to build the
collection of sets $\mathcal{C} = \{C_0,\dots,C_p\}$. By construction,
$\mathcal{C}$ is a partition of $\supp(z_i)$.
The following lemma provides the final piece needed in the proof of
the main result.
\begin{lemma}
\label{lem:tprop2}
For $z_i\in S$ and with $\gamma_i \ge 3$ we have
\begin{equation*}
  \sum^n_{j=1}L(z_i,z_j)<0 \;.
\end{equation*}
\end{lemma}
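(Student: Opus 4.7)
The plan is to expand the sum $\sum_{j=1}^n L(z_i, z_j)$ by regrouping according to the partition $\mathcal{C} = \{C_0, \ldots, C_p\}$ of $\supp(z_i)$ and to exploit a telescoping structure in each block $C_k$. Write
\begin{equation*}
\sum_{j=1}^n L(z_i, z_j) = \sum_{j=1}^n a_{ij}\sum_{l\in \supp(z_i)}\bigl(z_j(l+1) - z_j(l-1)\bigr) = \sum_{k=0}^p \Psi_{ik},
\end{equation*}
where
\begin{equation*}
\Psi_{ik} = \sum_{j=1}^n a_{ij}\sum_{s=0}^{q_k}\bigl(z_j(l_k + 2s + 1) - z_j(l_k + 2s - 1)\bigr).
\end{equation*}
The inner sum is a telescoping sum in steps of two, so it collapses to
\begin{equation*}
\Psi_{ik} = \sum_{j=1}^n a_{ij}\,z_j(l_k + 2q_k + 1) - \sum_{j=1}^n a_{ij}\,z_j(l_k - 1).
\end{equation*}

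Next I would handle the block $C_0$ separately. When $C_0 \neq \emptyset$ it equals $\mathcal{I}(l_0) = \{l_0, l_0+2,\ldots,l_0-2\}$, so the telescope wraps all the way around the cycle of length $T$ and, by $T$-periodicity of every $z_j$, yields $\Psi_{i0} = 0$. (If $C_0 = \emptyset$ there is nothing to show for $k=0$.)

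For $k \geq 1$ the key is to read off inequalities from the dynamics at the two endpoints of $C_k$. By the construction of the partition, $l_k \in \supp(z_i)$ and $l_k + 2q_k + 2 \notin \supp(z_i)$, so $z_i(l_k) = 1$ and $z_i(l_k + 2q_k + 2) = 0$. Applying the update rule \eqref{eq:genthreshold} at times $l_k - 1$ and $l_k + 2q_k + 1$ respectively gives
\begin{equation*}
\sum_{j=1}^n a_{ij}\,z_j(l_k - 1) \ge \theta_i \quad \text{and} \quad \sum_{j=1}^n a_{ij}\,z_j(l_k + 2q_k + 1) < \theta_i,
\end{equation*}
and therefore $\Psi_{ik} < \theta_i - \theta_i = 0$. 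Since the hypothesis $\gamma_i \ge 3$ guarantees that the partition $\mathcal{C}$ contains at least one block $C_k$ with $k \ge 1$ (otherwise $z_i$ would be constant or two-periodic), summing $\Psi_{i0} = 0$ together with the strictly negative contributions from $k \ge 1$ yields $\sum_{j=1}^n L(z_i, z_j) < 0$.

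The main conceptual obstacle is the bookkeeping at the boundary of each $C_k$: one has to recognize that although $C_k$ collects only even-offset indices inside the band, the telescoping exposes precisely the odd-offset positions $l_k - 1$ and $l_k + 2q_k + 1$, and that these are exactly the positions where the threshold inequalities from the transition rule apply. Once that correspondence is set up, the strict inequality $\Psi_{ik} < 0$ follows with no further calculation.
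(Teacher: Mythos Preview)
Your argument is correct and follows essentially the same route as the paper: decompose $\sum_j L(z_i,z_j)$ over the partition $\mathcal{C}$, telescope each $\Psi_{ik}$ to the two boundary terms, observe $\Psi_{i0}=0$, and for $k\ge 1$ read off the two threshold inequalities from $z_i(l_k)=1$ and $z_i(l_k+2q_k+2)=0$ to obtain $\Psi_{ik}<0$. The only addition you make explicit (that $\gamma_i\ge 3$ forces at least one block $C_k$ with $k\ge 1$) is also noted in the paper.
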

\begin{proof}
Using the partition $\mathcal{C}$ of $\supp(z_i)$, we have
\begin{align*}
\sum^n_{j=1} L(z_i, z_j)
     &= \sum^n_{j=1} a_{ij} \sum_{l\in \supp(z_i)} (z_j(l+1)-z_j(l-1)) \cdot 1 \\
     &= \sum^n_{j=1} a_{ij} \sum^p_{k=0} \sum_{l\in C_k} (z_j(l+1)-z_j(l-1))
     = \sum^p_{k=0} \sum^n_{j=1} a_{ij} \sum_{l\in C_k} (z_j(l+1)-z_j(l-1)) \\
     &= \sum^p_{k=0} \Psi_{ik}\;,
\end{align*}
where we have introduced
\begin{equation}
\label{eq:Psi}
\Psi_{ik} = \sum^n_{j=1} a_{ij} \sum_{l\in C_k} (z_j(l+1)-z_j(l-1)) \;.
\end{equation}
If $C_0 = \emptyset$ then $\Psi_{i0} = 0$, and if $C_0 = \{l_0,l_0+2,
\ldots, l_0-2\}$ we have
\begin{equation*}
\sum_{l\in C_0} (z_j(l+1)-z_j(l-1)) = 0\;.
\end{equation*}
In other words, we always have $\Psi_{i0} = 0$, so we assume $k>0$ in
the following.
From the assumption that $\gamma_i \ge 3$, there exists $C_k\ne
\varnothing$ such that $C_k = \{l_k,l_k+2,\ldots,l_k+2q_k\}$, so we
can re-write $\Psi_{ik}$ as
\begin{align*}
\Psi_{ik} &= \sum^n_{j=1}a_{ij}\sum_{s=0}^{q_k} (z_j(l_k+2s+1)-z_j(l_k+2s-1))\\
         &= \sum^n_{j=1}a_{ij} z_j(l_k+2q_k+1) - \sum^n_{j=1} a_{ij} z_j(l_k-1)\;.
\end{align*}
{\bfseries [Cross-reference for bi-threshold systems]} By the construction of
$C_k$, we have $z_i(l_k+2q_k+2) = 0$ and $z_i(l_k) = 1$ which, by the
definition of $f$ in~\eqref{eq:genthreshold}, is only possible if
\begin{equation}
\label{eq:synch_cond}
\sum^n_{j=1} a_{ij} z_j(l_k+2q_k+1)<\theta_i, \quad\text{and}\quad
\sum^n_{j=1}a_{ij}  z_j(l_k-1)\ge \theta_i\;.
\end{equation}
This implies that $\Psi_{ik}<0$ and we conclude that
\begin{equation*}
\sum^n_{j=1} L(z_i,z_j) = \sum^p_{k=1}\Psi_{ik} < 0 \;
\end{equation*}
as required.
\end{proof}

\begin{proof}[of Theorem~\ref{thm:thres-gca}]
From Lemma~\ref{lem:tprop1} we have that~$L$ is anti-symmetric so
\begin{equation*}
\sum^n_{i=1}\sum^n_{j=1}L(z_i,z_j) = 0 \;.
\end{equation*}
However, if we assume that $T\ge 3$, then there is $z_i$ with
$\gamma_i \ge 3$ and Corollary~\ref{cor:m01-m10} produces the desired
contradiction. We conclude that $T\le 2$.
\end{proof}








\bibliographystyle{abbrvnat}
\bibliography{refs,contagion-refs}
\label{sec:biblio}

\end{document}